\title{Noncommutativity as a colimit}
\author{Benno van den Berg\thanks{Technische Universit{\"a}t
    Darmstadt, Fachbereich Mathematik.}
    ~and Chris Heunen\thanks{Oxford University Computing
    Laboratory, supported by the Netherlands Organisation for
    Scientific Research (NWO). Part of this work was performed while
    the author visited the IQI at Caltech.}}
\newcommand{\after}{\circ}
\newcommand{\cat}[1]{\ensuremath{\mathbf{#1}}}
\newcommand{\Cat}[1]{\ensuremath{\mathbf{#1}}}
\newcommand{\id}[1][]{\ensuremath{\mathrm{id}_{#1}}}
\newcommand{\op}{\ensuremath{^{\mathrm{op}}}}
\newcommand{\cotuple}[2]{\ensuremath{[ #1\,,\,#2 ]}}
\newcommand{\tensor}{\ensuremath{\otimes}}
\newcommand{\field}[1]{\ensuremath{\mathbb{#1}}}
\newcommand{\ie}{\textit{i.e.}\ }
\newcommand{\cC}{\ensuremath{\mathcal{C}}}
\newcommand{\generated}[2]{\ensuremath{#1 \langle #2 \rangle}}
\newcommand{\Generated}[2]{\ensuremath{#1 \langle\hspace*{-2pt}\langle #2 \rangle\hspace*{-2pt}\rangle}}
\newcommand{\eqcomment}[1]{\ensuremath{\tag*{\mbox{\scriptsize{#1}}}}}
\newcommand{\commeas}{\ensuremath{\odot}}
\newcommand{\nul}{\ensuremath{\textbf{0}}}
\newcommand{\een}{\ensuremath{\textbf{1}}}
\newcommand{\Proj}{\ensuremath{\mathrm{Proj}}}
\newcommand{\colim}{\ensuremath{\mathop{\mathrm{colim}}}}
\newcommand{\norm}[1]{\left|\!\left| #1 \right|\!\right|}
\newcommand{\RP}{\ensuremath{\text{RP}}}
\theoremstyle{plain}
\newtheorem{theorem}{Theorem}
\newtheorem{lemma}[theorem]{Lemma}
\newtheorem{proposition}[theorem]{Proposition}
\newtheorem{corollary}[theorem]{Corollary}
\newtheorem{definition}[theorem]{Definition}
\newtheorem{remark}[theorem]{Remark}
\theoremstyle{nonumberplain}
\newtheorem{proof}{Proof}
\newif\ifbijnotfirst
\global\def\bijprev{}
\newbox\widerone
\newbox\widertwo
\newdimen\widerdimen
\newenvironment{bijectivecorrespondence}
  {\global\bijnotfirstfalse%
   \begin{array}{cl}}
  {\end{array}}
\newcommand{\makerule}[2]{%
  \setbox\widerone\hbox{\ensuremath{#1}}%
  \setbox\widertwo\hbox{\ensuremath{#2}}%
  \ifdim\wd\widerone>\wd\widertwo
    \widerdimen=\wd\widerone
  \else
    \widerdimen=\wd\widertwo
  \fi%
  \vbox{%
    \kern1pt%
    \hrule width \widerdimen height \arrayrulewidth depth 0pt%
    \kern1pt%
    \hrule width \widerdimen height \arrayrulewidth depth 0pt%
    \kern1pt%
  }}
\newcommand{\correspondence}[2][]{%
  \crcr%
  \ifbijnotfirst%
    \omit\hfil\makerule{#2}{\bijprev}\hfil\cr%
  \else
  \fi%
  \global\bijnotfirsttrue%
  \global\def\bijprev{#2}%
  \ensuremath{#2} & \mbox{#1}}
\begin{document}
\maketitle

\begin{abstract}
\noindent
  We give substance to the motto ``every partial algebra is the colimit of
  its total subalgebras'' by proving it for partial Boolean algebras (including
  orthomodular lattices), the new notion of partial C*-algebras
  (including noncommutative C*-algebras), and variations such as
  partial complete Boolean algebras and partial AW*-algebras. Both
  pairs of results are related by taking projections. As corollaries we
  find extensions of Stone duality and Gelfand duality.
  Finally, we investigate the extent to which the Bohrification
  construction~\cite{heunenlandsmanspitters:bohrification}, that works
  on partial C*-algebras, is functorial.
\end{abstract}

\section{Introduction}
\label{sec:intro}

This paper is intended as a contribution to the Bohrification
programme~\cite{heunenlandsmanspitters:bohrification}, which
tries to give a mathematically precise expression to Bohr's doctrine
of classical concepts, saying that a quantum mechanical system is
to be understood through its classical fragments. On Bohr's view, as
understood within this programme, quantum mechanical systems do not
allow a `global' interpretation as a classical system, but they
do so `locally'. The programme essentially makes two claims about
these classical `snapshots'. Firstly, it is only through these
snapshots that the behaviour of a system and physical reality can be
understood. Secondly, these snapshots contain all the information
about the system that is physically relevant. 

The main result of~\cite{heunenlandsmanspitters:bohrification}
(inspired by earlier work of Butterfield, Isham and
D{\"o}ring~\cite{doeringisham:thing}) is that the collection of these
classical snapshots can be seen as forming a single classical system in a
suitable topos -- its so-called Bohrification; we will briefly recall the
details of this construction in Section~\ref{sec:bohrification}. The
implication of this result is that a quantum mechanical 
system can be seen as a classical one, if one agrees that nothing
physically relevant is lost by considering classical snapshots and if
one is willing to change the logic from a classical into an
intuitionistic one. 
The question is left open as to how strong these premises
are: how much, if anything, of the information about the
original quantum mechanical system is lost by conceding in this way to
consider it as a classical system? This article investigates how much
information about a quantum mechanical system can be reconstructed
from its Bohrification by means of colimits. The fact that quantum
mechanical systems can be modelled as noncommutative algebras, and
classical systems as commutative ones, explains the title
``noncommutativity as a colimit''.

Our conceptual contributions to the Bohrification programme are
twofold. First, we contend that the programme is most naturally seen
as concerned with \emph{partial algebras}, \ie sets on which algebraic
structure is only partially defined. Such partial algebras are equipped
with a binary (`commeasurability') relation, which holds between two
elements whenever they can be elements of a single
classical snapshot. In this setting, commeasurability is often also
called commutativity, since in typical examples commeasurability means
commutativity in a totally defined algebraic structure. We will show
that a meaningful theory of such partial algebras can be
developed. In particular, and this is our second contribution, we will
indicate why Bohrification makes sense for partial algebras 
and use this to investigate the functorial aspects of this
construction. 
 
The idea to consider partial algebras is not new. In fact, in their
classic paper~\cite{kochenspecker:hiddenvariables}, Kochen and Specker
use the language of partial algebras to state their famous result,
saying that many algebras occurring in quantum mechanics cannot be
embedded (as partial algebras) into commutative ones (see also
\cite{redei:quantumlogic}). Their interpretation of this fact as
excluding a hidden variable interpretation of quantum mechanics has
remained somewhat controversial. In the Bohrification programme, this
result is taken just as a mathematical confirmation of the view that
quantum mechanical systems do not allow for global interpretations as
classical systems.  

The result by Kochen and Specker, together with the fact that
Bohrification works for partial algebras, indicates that the
Bohrification programme is most naturally developed in the context of
partial algebras. This led us to develop a new notion of `partial
C*-algebra', to the study of which most of this paper is devoted. Our
main technical result is that such a partial 
C*-algebra is the colimit of its total (commutative) subalgebras,
which explains the relation between a partial algebra and its
classical snapshots in categorical terms. 

In more detail, the contents of this paper are as follows. First we
consider Kochen and Specker's notion of a partial Boolean algebra, as
a kind of toy example, and prove our main result for them in 
Section~\ref{sec:bool}, as well as for the variation of partial
complete Boolean algebras. This also makes precise the widespread intuition
that an orthomodular lattice is an amalgamation of its Boolean blocks.
In this simple setting the theorem already has interesting corollaries, as
it allows us to derive an adjunction extending Stone duality in
Section~\ref{sec:stone}. But our main interest lies with partial
C*-algebras, which Section~\ref{sec:cstar} studies; we also prove the
main result for variations, such as AW*-algebras and Rickart
C*-algebras in that section. This enables an adjunction
extending Gelfand duality and puts the Kochen-Specker theorem in
another light in Section~\ref{sec:gelfand}. The two parallel settings
of Boolean algebras and C*-algebras are related by taking projections,
as Section~\ref{sec:projections} discusses. Finally,
Section~\ref{sec:bohrification} shows that the Bohrification
construction works for partial C*-algebras as well: we investigate its
functorial properties, and conclude that its essence is in fact (a
two-dimensional version of) the colimit theorem.\footnote{Both the
  categories of Boolean algebras and of commutative C*-algebras are
  algebraic, \ie monadic over the category of
  sets~\cite{johnstone:stonespaces}. We expect that the definitions
  and results of the present article can be extended to a more general
  theory of partial algebra, but refrain from doing so because the two
  categories mentioned are our main motivation.}

\section{Partial Boolean algebras}
\label{sec:bool}

We start with recalling the definition of a partial Boolean algebra, as
introduced by Kochen and Specker~\cite{kochenspecker:hiddenvariables}.

\begin{definition}
  A \emph{partial Boolean algebra} consists of a set $A$ with
  \begin{itemize}
    \item a reflexive and symmetric binary
      (\emph{commeasurability}) relation $\commeas \subseteq A \times A$;
    \item elements $0,1 \in A$;
    \item a (total) unary operation $\lnot \colon A \to A$;
    \item (partial) binary operations $\land, \lor \colon \commeas \to A$;
  \end{itemize}
  such that every set $S \subseteq A$ of pairwise commeasurable elements is contained in a set $T \subseteq A$, whose elements are also pairwise commeasurable, and on which the above operations determine a Boolean algebra structure.\footnote{Note that this means that $T$ must contain 0 and 1 and has to be closed under $\lnot, \land$ and $\lor$.\label{footnote:subalgebrastructural}}

  A morphism of partial Boolean algebras is a function that
  preserves commeasurability and all the algebraic structure, whenever
  defined. We write $\Cat{PBoolean}$ for the resulting category.
\end{definition}

Clearly, a partial Boolean algebra whose commeasurability relation is total is nothing but a Boolean algebra. 
For another example, if we declare two elements $a,b$ of an
orthomodular lattice to be commeasurable when $a = (a \land b) \lor
(a \land b^\perp)$, as is standard, any orthomodular lattice is
seen to be a partial Boolean algebra.
In this case the above observation about totality becomes a known fact:
an orthomodular lattice is a Boolean algebra if and only if any pair of elements is commeasurable~\cite{kalmbach:orthomodularlattices}.

We introduce some notation and terminology. If $A$ is a partial
Boolean algebra, then a subset $T$ of pairwise commeasurable elements
which is closed under all the algebraic operations of $A$ will be
called a \emph{commeasurable} or \emph{total} subalgebra. Clearly, a
commeasurable subalgebra has the structure of a Boolean algebra. Note
that if $A$ is a partial Boolean algebra and $S$ is subset of pairwise
commeasurable elements, then there must be a \emph{smallest}
commeasurable subalgebra $T$ that contains $S$: it has to consist of
the values of Boolean expressions built from elements of $S$. We
denote it by $\generated{A}{S}$. 

Given a partial Boolean algebra $A$, the collection of its commeasurable
subalgebras $\cC(A)$ is partially ordered by inclusion. In
fact, $\cC$ is a functor $\Cat{PBoolean} \to \Cat{POrder}$ to the category
of posets and monotone functions. Regarding posets as categories,
$\cC(A)$ gives a diagram in the category $\Cat{PBoolean}$ (in fact, it
also defines a diagram in the category $\Cat{Boolean}$ of Boolean
algebras). The following proposition lists some easy properties of
this diagram. 

\begin{proposition}
\label{prop:structureCboolean}
  Let $A$ be a partial Boolean algebra.
  \begin{enumerate}
    \item[(a)] The least element of the poset $\cC(A)$ is
      $\generated{A}{0} = \generated{A}{1} = \{ 0,1\}$.
    \item[(b)] The atoms of $\cC(A)$ are
      $\generated{A}{a}=\{0,a,\neg a,1\}$ for nontrivial $a \in A$ (an
      element $p$ of a poset with least element $0$ is an atom when
      there are no elements $x$ such that $0 < x < p$). 
    \item[(c)] Two total subalgebras $S$ and $T$ have a common upper
      bound in $\cC(A)$ if and only if all elements of $S$ are
      commeasurable with all the elements of $T$.   
    \item[(d)] $A$ is a (total) Boolean algebra if and only if the poset $\cC(A)$
      is filtered (meaning that any two elements have an upper
      bound). In that case, $A$ is the largest element of the poset
      $\cC(A)$. 
  \end{enumerate}
  \end{proposition}
\begin{proof}  
  Parts (a) and (b) are easy to show and therefore we omit their proofs. 
  
  To see (c), observe that if total subalgebras $S$
  and $T$ have a common upper bound $U$, then all elements of $S$ are
  commeasurable with all elements in $T$, because all elements of $S$
  and $T$ belong to the commeasurable subalgebra $U$. Conversely, if
  all elements of $S$ are commeasurable 
  with those of $T$, then $\generated{A}{S \cup T}$ is an upper bound
  (in fact, the least upper bound) in $\cC(A)$ of $S$ and $T$.  

  If $A$ is total, then $A$ is the top element of $\cC(A)$ and hence
  $\cC(A)$ is filtered. If, on the other hand, $\cC(A)$ is filtered,
  then for any two elements $a, b \in A$ the total subalgebras
  $\generated{A}{a}$ and $\generated{A}{b}$ must have an upper bound,
  which implies (by (c)) that $a$ and $b$ are commeasurable. This shows
  (d). 
  \qed
\end{proof}

\begin{remark}
 One can show that $\cC(A)$ is a directed complete partial order
  (dcpo), which is algebraic, and is such that for every compact
  element $x$ the downset $\mathop{\downarrow}x$ is dually isomorphic to a
  finite partition lattice. The main result of the
  paper~\cite{graetzerkohmakkai:booleansubalgebras} suggests that 
  every such dcpo is the $\cC(A)$ of a unique partial Boolean algebra
  $A$. Whether similar results hold for partial
  C*-algebras remains to be seen. 
\end{remark}

We are now ready to prove the first version of our main result.

\begin{theorem}
\label{thm:colimbool}
  Every partial Boolean algebra is a colimit of its (finitely
  generated) total subalgebras.
\end{theorem}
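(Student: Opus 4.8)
The plan is to verify the universal property directly, taking $A$ itself as the candidate colimit together with the family of inclusions $\iota_C\colon C\hookrightarrow A$ as the candidate cocone; this sidesteps having to describe how colimits are actually built in $\Cat{PBoolean}$. Here $C$ ranges over the finitely generated commeasurable subalgebras of $A$, \ie the subalgebras $\generated{A}{S}$ for finite pairwise-commeasurable $S\subseteq A$, ordered by inclusion. First I would check that each $\iota_C$ is a morphism of partial Boolean algebras: any two elements of $C$ are pairwise commeasurable in $A$, so $\iota_C$ preserves $\commeas$; and the operations of $C$ are by definition the restrictions of those of $A$, so $\iota_C$ preserves $0,1,\lnot,\land,\lor$ wherever they are defined. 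Compatibility $\iota_{C'}\after(C\hookrightarrow C')=\iota_C$ for $C\subseteq C'$ is immediate, so we do have a cocone.

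Now let $(f_C\colon C\to B)_C$ be an arbitrary cocone over this diagram. Since $\commeas$ is reflexive, $\{a\}$ is pairwise commeasurable for every $a\in A$, so $\generated{A}{a}$ is a finitely generated commeasurable subalgebra containing $a$; I define $g\colon A\to B$ by $g(a):=f_{\generated{A}{a}}(a)$. The single lemma that does all the work is: for every $C$ in the diagram and every $a\in C$, one has $g(a)=f_C(a)$. Indeed $\generated{A}{a}\subseteq C$ by minimality of $\generated{A}{a}$, so $f_C(a)=\bigl(f_C\after(\generated{A}{a}\hookrightarrow C)\bigr)(a)=f_{\generated{A}{a}}(a)=g(a)$ by cocone compatibility. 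This lemma immediately gives $g\after\iota_C=f_C$ for all $C$, and since every $a\in A$ lies in some $C$ in the diagram (namely $\generated{A}{a}$) it also forces any mediating morphism to coincide with $g$, which is the uniqueness part.

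It remains to check that $g$ is a morphism of partial Boolean algebras. Preservation of $0$ and $1$ follows from the lemma together with the fact that $f_{\{0,1\}}$ is a morphism and $\generated{A}{0}=\generated{A}{1}=\{0,1\}$ (Proposition~\ref{prop:structureCboolean}(a)); preservation of $\lnot$ follows likewise from $\generated{A}{a}=\{0,a,\lnot a,1\}$ (Proposition~\ref{prop:structureCboolean}(c)). For commeasurability and the partial operations the key point is that whenever $a\commeas b$ in $A$, the pair $\{a,b\}$ is pairwise commeasurable, so $C:=\generated{A}{\{a,b\}}$ is a subalgebra in the diagram containing $a$, $b$, $a\land b$ and $a\lor b$, in which $a$ and $b$ are (trivially) commeasurable; applying the lemma and the fact that $f_C$ is a morphism then yields $g(a)\commeas g(b)$, $g(a\land b)=g(a)\land g(b)$ and $g(a\lor b)=g(a)\lor g(b)$. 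Hence $A=\colim C$. Note that every subalgebra used above, namely $\generated{A}{a}$ and $\generated{A}{\{a,b\}}$, is already finitely generated, so nothing is lost by restricting the diagram to the finitely generated commeasurable subalgebras; the same proof also applies verbatim to the full poset $\cC(A)$.

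The argument is essentially an exercise in organising this bookkeeping, so there is no deep obstacle; the one genuinely non-formal feature — and the reason the statement is not a triviality — is that the indexing poset is \emph{not} filtered (by Proposition~\ref{prop:structureCboolean}(b) it is filtered precisely when $A$ is already Boolean), so one cannot appeal to the standard description of filtered colimits in a variety, and is instead forced to invoke the defining axiom of a partial Boolean algebra, that every finite pairwise-commeasurable set sits inside a single subalgebra of the diagram, at exactly the points where $g$ must be shown to respect the partial structure.
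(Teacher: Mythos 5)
Your proposal is correct and follows essentially the same route as the paper's own proof: you take $A$ with the inclusions as the candidate colimiting cocone, define the mediating map by $a\mapsto f_{\generated{A}{a}}(a)$, and use cocone compatibility over $\generated{A}{a,b}$ to verify that it is a morphism and is unique. Your explicit isolation of the lemma $g(a)=f_C(a)$ for $a\in C$, and the closing remark on non-filteredness, are just a more detailed write-up of the same argument.
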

\begin{proof}
  Let $A$ be a partial Boolean algebra, and consider its diagram of
  (finitely generated) commeasurable subalgebras $C$.
  Define functions $i_C \colon C \to A$ by the inclusions; these are
  morphisms of $\Cat{PBoolean}$ by construction. Moreover, they form a
  cocone; we will prove that this cocone is universal.
  If $f_C \colon C \to B$ is another cocone, define a function $m
  \colon A \to B$ by $m(a) = f_{\generated{A}{a}}(a)$. It now follows from
  the assumption that the $f_C$ are morphisms of $\Cat{PBoolean}$ that
  $m$ is a well-defined morphism, too. To see this, we need to show that $m$ preserves commeasurability and the algebraic operations of $A$. We check that $m$ preserves commeasurability, omitting a very similar verification that it also preserves the algebraic operations. So, if
  $a \commeas b$, then also $m(a)\commeas m(b)$, since
  $\generated{A}{a, b}$ is a total subalgebra and
  \begin{align*}
        m(a)
    & = f_{\generated{A}{a}}(a)
      = f_{\generated{A}{a, b}}(a), \\
        m(b)
    & = f_{\generated{A}{b}}(b)
      = f_{\generated{A}{a, b}}(b),
  \end{align*}
  because the $f_C$ form a cocone. One easily verifies that $f_C = m
  \after i_C$, and that $m$ is the unique such morphism.
  \qed
\end{proof}

Kalmbach's ``Bundle Lemma''~\cite{kalmbach:orthomodularlattices} gives
sufficient conditions for a family of Boolean algebras to combine
into a partial Boolean algebra, so that it could be regarded as a converse
of the previous theorem.

Notice that the morphisms of $\Cat{PBoolean}$ are the
weakest ones for which the previous theorem holds. For example, even when $A$ and $B$ are orthomodular lattices, the mediating morphism $m: A \to B$ in the proof of the previous theorem need
not be a homomorphism of orthomodular lattices. For a counterexample,
consider the function
\[\vcenter{\xymatrix@C-4ex@R-4ex{
    &&& 1 \ar@{-}[dlll] \ar@{-}[dl] \ar@{-}[dr] \ar@{-}[drrr] \\
    a\llap{\phantom{$a^\perp_\perp$}}
    && \lnot a\llap{\phantom{$a^\perp_\perp$}}
    && b\rlap{\phantom{$a^\perp_\perp$}}
    && \lnot b \rlap{\phantom{$a^\perp_\perp$}}  \\
    &&& 0 \ar@{-}[ulll] \ar@{-}[ul] \ar@{-}[ur] \ar@{-}[urrr]
  }}
  \qquad\stackrel{m}{\longrightarrow}\qquad
  \vcenter{\xymatrix@C-4ex@R-4ex{
    & 1 \ar@{-}[dl] \ar@{-}[dr] \\
    c\llap{\phantom{$a^\perp_\perp$}}
    && \lnot c \rlap{\phantom{$a^\perp_\perp$}}  \\
    & 0 \ar@{-}[ul] \ar@{-}[ur]
  }}
\]
given by $m(0)=0, m(a)=m(b)=c, m(\lnot a)=m(\lnot b)=\lnot c,
m(1)=1$. It preserves $0$, $1$, $\lnot$ and $\leq$. The only commeasurable
subalgebras of the domain $A$ are $\generated{A}{0}$, $\generated{A}{a}$ and
$\generated{A}{b}$, and $m$ preserves $\land$ when restricted to
those. However, $m(a \land b) = m(0) = 0 \neq c = c \land c =
m(a) \land m(b)$. (Of course, $\{ 0, a, b, 1\}$ is a Boolean algebra,
but it is not a commeasurable subalgebra, as it does not have the same
negation $\lnot$ as $A$; see also footnote~\ref{footnote:subalgebrastructural}.) 

It follows from the previous theorem that the partial Boolean algebra
with a prescribed poset of total subalgebras is unique up to
isomorphism (of partial Boolean algebras). To actually reconstruct a
partial Boolean algebra from its total subalgebras, this should be
complemented by a description of colimits in the category
$\Cat{PBoolean}$, as we now discuss.

The coproduct of a family $A_i$ of partial Boolean algebras is got
by taking their disjoint union, identifying all the elements $0_i$,
and identifying all the elements $1_i$. Notice that elements from
different summands $A_i$ are never commeasurable in the coproduct.
In particular, the initial object $\nul$ is the partial
Boolean algebra $\{0,1\}$ with two distinct elements.




Incidentally, $\Cat{PBoolean}$ is complete. Products and
equalizers of partial Boolean algebras are constructed as in the
category of sets; products have commeasurability and algebraic
structure defined componentwise, and equalizers have subalgebra
structure.  Hence the limit of a diagram of Boolean algebras is the
same in the categories of Boolean algebras and of partial Boolean
algebras. In particular, the terminal object $\een$ is the partial
Boolean algebra with a single element $0=1$.

Coequalizers are harder to describe constructively, but the following
theorem proves they do exist.

\begin{theorem}
\label{thm:PBoolcocomplete}
  The category $\Cat{PBoolean}$ is complete and cocomplete.
\end{theorem}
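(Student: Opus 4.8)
The plan is as follows. Completeness was already established in the paragraphs immediately preceding the statement, so what remains is cocompleteness, and since coproducts were described there as well it would in principle suffice to construct coequalizers. Rather than doing this by hand --- which is delicate, for reasons indicated below --- I would argue abstractly: I would show that $\Cat{PBoolean}$ is \emph{finitely accessible}, and then invoke the standard characterisation of locally presentable categories as exactly the accessible categories that are complete (equivalently, cocomplete). This gives at once that $\Cat{PBoolean}$ is locally presentable, hence in particular cocomplete.

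To establish finite accessibility I would proceed in three steps. First, $\Cat{PBoolean}$ has filtered colimits and they are computed on underlying sets: given a filtered diagram, equip the colimit of the underlying sets with the commeasurability relation and operations that hold, respectively become defined, \emph{eventually} in the diagram; this is well defined because the transition maps are homomorphisms. The point needing attention is that this set still satisfies the Boolean block axiom, and the key observation here is that the block axiom is equivalent to its restriction to \emph{finite} pairwise commeasurable subsets --- a block over an arbitrary such subset $S$ is recovered as the set of values of all Boolean expressions over $S$, each such value already lying in a block over a finite subset of $S$ --- and this finitary condition is manifestly preserved by filtered colimits formed as above. Second, every partial Boolean algebra $A$ is the directed colimit of its finitely generated sub-partial-Boolean-algebras: any two of these are contained in the one generated by the union of their finite generating sets, so they form a directed poset under inclusion, and the colimit of the resulting diagram, computed on underlying sets as in the first step, is $A$ itself. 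Third, a finitely generated partial Boolean algebra is \emph{finite}: a Boolean expression mentions only finitely many generators and is only evaluated on pairwise commeasurable arguments, so only finitely many Boolean blocks, each of them finite, are involved; and a finite object is finitely presentable. Together these exhibit the isomorphism classes of finite partial Boolean algebras as a set of finitely presentable objects of which every object is a directed colimit, so $\Cat{PBoolean}$ is finitely accessible, and, being complete, it is locally presentable and hence cocomplete.

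I expect the main obstacle to be the first step: verifying that filtered colimits are well behaved, and in particular that the block axiom survives, which is precisely why the reduction to finite pairwise commeasurable subsets is needed. A secondary point to handle carefully is the claim in the third step that finitely generated partial Boolean algebras are genuinely finite, which amounts to showing that the iterative closure of a finite set under the partial operations terminates (if one could only bound the cardinality, accessibility at a larger regular cardinal would still follow, but finiteness is the clean statement). For completeness I would also record the concrete alternative: the coequalizer of $f, g \colon A \to B$ can be built by a transfinite iteration --- quotient $B$ by the least partial congruence identifying each $f(a)$ with $g(a)$, then freely adjoin a Boolean block for every pairwise commeasurable family that has appeared but is not yet contained in one, and repeat, the process stabilising for cardinality reasons. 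The reason the abstract argument is preferable is exactly that in this direct construction adjoining new blocks can force new identifications and conversely, so the two moves must be interleaved and controlled simultaneously.
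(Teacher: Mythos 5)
Your proposal is correct in outline but takes a genuinely different route from the paper. The paper does not construct coequalizers or invoke accessibility at all: having observed that $\Cat{PBoolean}$ is complete and that the diagonal functor $\Delta \colon \Cat{PBoolean} \to \Cat{PBoolean}^{(\cdot \rightrightarrows \cdot)}$ preserves limits, it applies Freyd's General Adjoint Functor Theorem, verifying the solution set condition by taking for $f,g \colon A \to B$ the set of (isomorphism classes of) surjections $h_i \colon B \to Q_i$ with $h_i f = h_i g$, and noting that any coequalizing $h$ factors through its set-theoretic image, which is a partial Boolean subalgebra of the codomain. That argument is shorter and is exactly what the paper reuses for $\Cat{PCstar}$, where your finitary analysis would need real modification (norm completeness is not a finitary condition, so commutative C*-algebras are not finitely presentable and one would have to work at $\aleph_1$ or higher from the start). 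Your approach buys more: local presentability gives cocompleteness, the adjoint functor theorem for free, and accessibility of the functors involved, none of which the paper extracts.

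The one step of yours I would not accept as written is the third: the argument that a finitely generated partial Boolean algebra is finite ``because a Boolean expression mentions only finitely many generators and is only evaluated on pairwise commeasurable arguments, so only finitely many blocks are involved'' does not work. The arguments of the partial operations are derived elements, not generators, and derived elements can be commeasurable in ways that are not witnessed by any block over a subset of the generators (e.g.\ $a\wedge c$ and $b\wedge d$ may be commeasurable while no element of $\{a,c\}$ is commeasurable with any element of $\{b,d\}$), so the closure process can keep producing new blocks; I see no reason it must terminate in finitely many steps. You flag this yourself, and your fallback is sound and should be promoted to the main line: the closure of a finite set under countably many finitary partial operations is a countable increasing union of finite sets, hence countable, so countably generated subalgebras are countable, are $\aleph_1$-presentable by the same factorization argument you use for finite objects, and exhibit $\Cat{PBoolean}$ as $\aleph_1$-accessible; together with completeness this still yields local presentability and hence cocompleteness. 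With that adjustment your proof is complete; your reduction of the block axiom to finite pairwise commeasurable subsets, which carries the weight of the filtered-colimit step, is correct.
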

\begin{proof}
  We are to show that $\Cat{PBoolean}$ has coequalizers, \ie that the
  diagonal functor $\Delta \colon \Cat{PBoolean} \to
  \Cat{PBoolean}^{(\bullet \rightrightarrows \bullet)}$ has a left adjoint
  (where $\bullet \rightrightarrows \bullet$ is the free category generated by the
  graph consisting of two vertices and two parallel arrows between
  them). Since we 
  already know that $\Cat{PBoolean}$ is complete and $\Delta$
  preserves limits, Freyd's adjoint functor theorem shows that it
  suffices if the following solution set condition is
  satisfied~\cite[V.6]{maclane:categorieswork}. For
  each $f,g \colon A \to B$ in $\Cat{PBoolean}$ there is a set-indexed
  family $h_i \colon B \to Q_i$ such that $h_i f = h_i g$, and if $hf
  = hg$ then $h$ factorizes through some $h_i$.

  Take the collection of $h_i \colon B \to Q_i$ to comprise all
  `quotients', \ie (isomorphism classes of) surjections $h_i$
  of partial Boolean algebras such that $h_i f = h_i g$.
  This collection is in fact a set. The proof is finished by
  observing that every morphism $h \colon B \to Q$ of partial Boolean
  algebras factors through (a surjection onto) its set-theoretical
  image, which is a partial Boolean subalgebra of $Q$, inheriting
  commeasurability from $B$ and algebraic operations from $Q$.
  \qed
\end{proof}

\subsection{Variations}

Results similar to those above hold for many classes of Boolean
algebras, such as complete or countably complete Boolean
algebras. For example, the former variation can be defined as
follows.

\begin{definition}
  A \emph{partial complete Boolean algebra} consists of a partial
  Boolean algebra together with a (partial) operation
  \[
    \bigvee \colon \{ X \subseteq A \mid X \times X \subseteq \commeas \}
    \to A
  \]
  such that every set $S \subseteq A$ of pairwise commeasurable
  elements is contained in a set $T \subseteq A$, whose elements are
  also pairwise commeasurable, and on which the above operations
  determine a complete Boolean algebra structure.\footnote{So $T$ is
  not only closed under $\lnot, \land$ and $\lor$, but also under $\bigvee$.}

  A morphism of partial complete Boolean algebras is a function that
  preserves commeasurability and all the algebraic structure,
  including $\bigvee$, whenever defined. We write $\Cat{PCBoolean}$
  for the resulting category.
\end{definition}

A version of our main theorem also holds for such partial complete
Boolean algebras, when we define a total subalgebra of a partial
complete Boolean algebra to be a total subalgebra of the underlying
partial Boolean algebra that is additionally closed under $\bigvee$.

\begin{theorem}
  Every partial complete Boolean algebra is a colimit of its total subalgebras.
\end{theorem}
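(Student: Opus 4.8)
The plan is to follow the proof of Theorem~\ref{thm:colimbool} almost verbatim, inserting one additional verification for the operation $\bigvee$. Let $A$ be a partial complete Boolean algebra and let $\cC(A)$ be the poset of its complete Boolean subalgebras, ordered by inclusion; here a complete Boolean subalgebra is a subset, closed under all the operations (including $\bigvee$ of its pairwise commeasurable subsets), on which the inherited operations form a complete Boolean algebra, so that in particular the inclusion is a morphism of $\Cat{PCBoolean}$ and joins inside the subalgebra agree with joins in $A$. The inclusions $i_C \colon C \to A$ then form a cocone in $\Cat{PCBoolean}$, and I claim this cocone is universal.

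Given another cocone $f_C \colon C \to B$, I would define $m \colon A \to B$ by $m(a) = f_{\generated{A}{a}}(a)$, where $\generated{A}{a} = \{0,a,\lnot a,1\}$ is finite, hence trivially complete, hence a member of $\cC(A)$. That $m$ preserves $0$, $1$, $\lnot$, $\land$, $\lor$ and commeasurability is shown exactly as in the proof of Theorem~\ref{thm:colimbool}: for a commeasurable pair $a \commeas b$ one passes through the subalgebra $\generated{A}{a,b}$, which is again finite and hence a complete Boolean subalgebra, and uses the cocone equations $m(a) = f_{\generated{A}{a,b}}(a)$ and $m(b) = f_{\generated{A}{a,b}}(b)$.

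The one genuinely new point is that $m$ preserves $\bigvee$. Let $X \subseteq A$ be a set of pairwise commeasurable elements. By the defining axiom of a partial complete Boolean algebra, $X$ is contained in a set $T$ of pairwise commeasurable elements on which the operations of $A$ determine a complete Boolean algebra structure; thus $T \in \cC(A)$, and $\bigvee X$ computed in $A$ lies in $T$ and coincides with the join computed in $T$. Because the $f_C$ form a cocone, $m$ restricts to $f_T$ on $T$ — indeed for $t \in T$ we have $\generated{A}{t} \subseteq T$, so $m(t) = f_{\generated{A}{t}}(t) = f_T(t)$ — and because $f_T$ is a morphism of $\Cat{PCBoolean}$ it preserves $\bigvee$. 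Since $m$ preserves commeasurability, the set $\{m(x) \mid x \in X\}$ is pairwise commeasurable in $B$, so the join below is defined, and
\[
  m\Bigl(\bigvee X\Bigr)
  = f_T\Bigl(\bigvee X\Bigr)
  = \bigvee_{x \in X} f_T(x)
  = \bigvee_{x \in X} m(x) .
\]

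The remaining verifications, that $f_C = m \after i_C$ for every $C \in \cC(A)$ and that $m$ is the unique morphism of $\Cat{PCBoolean}$ with this property, are identical to those in Theorem~\ref{thm:colimbool}. The only obstacle worth flagging is the bookkeeping in the previous paragraph: one must make sure that the set $T$ supplied by the axiom really is an object of the diagram and that joins in $T$ coincide with joins in $A$. Both are immediate from the way complete Boolean subalgebras have been defined, so no real difficulty arises, and the same argument adapts mutatis mutandis to countably complete Boolean algebras and similar variations.
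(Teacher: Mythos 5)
Your proof is correct and follows exactly the route the paper intends: the paper's own proof is simply the remark that the argument of Theorem~\ref{thm:colimbool} carries over, and you have supplied the same mediating morphism $m(a) = f_{\generated{A}{a}}(a)$ together with the one genuinely new verification (preservation of $\bigvee$ via a complete Boolean subalgebra $T$ containing the commeasurable set $X$, on which $m$ restricts to $f_T$). The bookkeeping points you flag --- that $T$ lies in the diagram and that joins in $T$ agree with $\bigvee$ in $A$ --- are exactly the right things to check and are handled correctly.
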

\begin{proof}
  Completely analogous to Theorem \ref{thm:colimbool}.
  \qed
\end{proof}

\section{Stone duality}
\label{sec:stone}

The full subcategory of $\Cat{PBoolean}$ consisting of (total) Boolean
algebras is just the category $\Cat{Boolean}$ of Boolean
algebras and their homomorphisms. This category is dual to the category of
Stone spaces and continuous functions via Stone
duality~\cite{johnstone:stonespaces}:
\begin{equation}
\label{eq:stone}\xymatrix{
  \Cat{Boolean} \ar@<1ex>^-{\Sigma}[rr] \ar@{}|-{\sim}[rr]
  && \Cat{Stone}\op, \ar@<1ex>^-{\Cat{Loc}(-,\{0,1\})}[ll]
}\end{equation}
where $\Sigma(A)$ is the Stone spectrum of a Boolean algebra $A$.
The dualizing object $\{0,1\}$ is both a locale and a (partial) Boolean
algebra; recall that it is in fact the initial partial Boolean algebra
$\nul$.

One might expect that the category of partial Boolean algebras enters
Stone duality~\eqref{eq:stone}, and indeed the colimit theorem,
Theorem~\ref{thm:colimbool}, enables us to prove the following extension.

\begin{proposition}
  There is a reflection
  \[\xymatrix{
    \Cat{PBoolean} \ar@<1ex>^-{K}[rr] \ar@{}|-{\perp}[rr]
    && \Cat{Stone}\op, \ar@<1ex>^-{\Cat{Loc}(-,\{0,1\})}[ll]
  }\]
  in which the functor $K$ is determined by $ K(A) = \lim_{C \in \cC(A)\op} \Sigma(C)$.
\end{proposition}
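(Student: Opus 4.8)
The plan is to exhibit, for each Stone space $X$ and partial Boolean algebra $A$, a bijection
\[
  \Cat{Stone}(X, K(A)) \;\cong\; \Cat{PBoolean}(A, GX), \qquad GX := \Cat{Loc}(X,\{0,1\}),
\]
natural in $X$ and $A$, and then to observe that the right adjoint $G$ is fully faithful. Three facts are combined: continuity of the hom-functor $\Cat{Stone}(X,-)$, classical Stone duality~\eqref{eq:stone}, and the colimit theorem, Theorem~\ref{thm:colimbool}.

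First I would record the behaviour of $G$ on total algebras. For a Stone space $X$, the object $GX$ is the Boolean algebra of clopens of $X$; it is \emph{total}, and Stone duality gives $\Sigma(GX)\cong X$. Since $\Cat{Boolean}\hookrightarrow\Cat{PBoolean}$ is full and commeasurability is trivial on total algebras, for every \emph{total} $C$ we have $\Cat{PBoolean}(C, GX) = \Cat{Boolean}(C, GX)$, which by Stone duality is naturally isomorphic to $\Cat{Stone}\op(\Sigma C, X) = \Cat{Stone}(X, \Sigma C)$. Every $C$ in the poset $\cC(A)$ of commeasurable subalgebras of $A$ is total, so this applies pointwise along that diagram.

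Next I would bring in the colimit theorem. The proof of Theorem~\ref{thm:colimbool} works verbatim for the diagram of \emph{all} commeasurable subalgebras — the mediating morphism is still read off the finite subalgebras $\generated{A}{a}$ and $\generated{A}{a,b}$ — so $A = \colim_{C\in\cC(A)} C$ in $\Cat{PBoolean}$. As $\Cat{Stone}$ is complete (the inclusion $\Cat{Stone}\hookrightarrow\Cat{Top}$ creates small limits), $K(A) = \lim_{C\in\cC(A)\op}\Sigma C$ exists, and
\[
  \Cat{Stone}(X, K(A)) \;\cong\; \lim_{C\in\cC(A)\op}\Cat{Stone}(X, \Sigma C) \;\cong\; \lim_{C\in\cC(A)\op}\Cat{PBoolean}(C, GX) \;\cong\; \Cat{PBoolean}(A, GX),
\]
where the isomorphisms are, respectively, continuity of $\Cat{Stone}(X,-)$, the previous paragraph applied pointwise, and the universal property of $A = \colim_C C$. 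On morphisms, $f\colon A\to A'$ acts via the restrictions $C\to\generated{A'}{f(C)}$ of the indexing diagrams, which through $\Sigma$ and the limit projections of $K(A')$ yield a cone over the $\Sigma$-diagram of $A$, hence a continuous map $K(A')\to K(A)$, \ie a morphism $K(A)\to K(A')$ in $\Cat{Stone}\op$; with this definition the displayed bijection is natural in $A$ as well (naturality in $X$ being inherited from that of Stone duality and of the hom-functors). Thus $K\dashv G$.

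Finally, the adjunction is a reflection because $G$ is fully faithful, being the composite of the equivalence part of Stone duality with the full inclusion $\Cat{Boolean}\hookrightarrow\Cat{PBoolean}$; concretely, $GX$ is total, so by Proposition~\ref{prop:structureCboolean}(b) the poset $\cC(GX)$ has largest element $GX$, whence $\cC(GX)\op$ has an initial object and $K(GX)\cong\Sigma(GX)\cong X$, and one checks this identification is the counit. The same remark shows $K$ restricts to $\Sigma$ on $\Cat{Boolean}$, so the reflection extends Stone duality. I expect the only real effort to be the naturality bookkeeping — verifying the composite isomorphism is natural simultaneously in $A$ and $X$ and that $\cC$, $\Sigma$ and the hom-functors mesh — together with the mild point that Theorem~\ref{thm:colimbool} must be applied to the full poset $\cC(A)$ rather than just its finitely generated part; the categorical content is just a three-step composition of standard isomorphisms.
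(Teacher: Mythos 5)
Your proposal is correct and follows essentially the same route as the paper: a three-step chain of natural bijections combining the universal property of the limit defining $K(A)$, classical Stone duality applied to each (total) commeasurable subalgebra $C \in \cC(A)$, and Theorem~\ref{thm:colimbool}, with the reflection property coming from the fact that $GX$ is total so that $K(GX) \cong \Sigma(GX) \cong X$. The extra observations you make (full faithfulness of $G$, and that the colimit theorem applies to the whole poset $\cC(A)$ and not only its finitely generated part) are consistent with, and slightly more explicit than, the paper's own argument.
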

\begin{proof}
  Let $A$ be a partial Boolean algebra and $X$ a Stone space.
  Then there are bijective correspondences:
  \[\begin{bijectivecorrespondence}
    \correspondence[(in \Cat{Stone}\op)]{f \colon K(A) = \lim_{C \in
        \cC(A)\op} \Sigma(C) \to X}
    \correspondence[(in \Cat{Stone}\op)]{\forall_{C \in \cC(A)}.\;
        f_C \colon \Sigma(C) \to X}
    \correspondence[(in \Cat{Boolean})]{\forall_{C \in \cC(A)}.\;
        g_C \colon C \to \Cat{Loc}(X,\{0,1\})}
    \correspondence[(in \Cat{PBoolean}).]{g \colon A \to
      \Cat{Loc}(X,\{0,1\})}
  \end{bijectivecorrespondence}\]
  The first correspondence holds by definition of limit, the middle
  correspondence holds by Stone duality~\eqref{eq:stone}, and the
  last correspondence holds by Theorem~\ref{thm:colimbool}.
  Since all correspondences are natural in $A$ and $X$, this
  establishes the adjunction $K \dashv \Cat{Loc}(-,\{0,1\})$.
  Finally, since a Boolean algebra is trivially a colimit of
  itself in $\Cat{PBoolean}$, the adjunction is a reflection.
  \qed
\end{proof}

\begin{theorem}
  The reflection $K \dashv \Cat{Loc}(-,\{0,1\})$ extends Stone
  duality, \ie the following diagram commutes (serially).
  \[\xymatrix@C+9ex@R+4ex{
    & \Cat{Boolean} \ar@{^{(}->}[dl] \ar@/^/^-{\Sigma}[dr]
    \ar@{}|-{\sim}[dr] \\
    \Cat{PBoolean} \ar@/^/^-{K}[rr] \ar@{}|-{\perp}[rr]
    && \Cat{Stone}\op \ar@/^/^-{\Cat{Loc}(-,\{0,1\})}[ll]
    \ar@/^/^(.75){\Cat{Loc}(-,\{0,1\})}[ul]
  }\]
\end{theorem}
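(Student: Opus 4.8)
The plan is to verify the two triangles separately. The upper-right triangle $\Sigma \after \iota \simeq K \after \iota$ (where $\iota \colon \Cat{Boolean} \hookrightarrow \Cat{PBoolean}$ is the inclusion, or rather the triangle asserting $K$ restricted to $\Cat{Boolean}$ equals $\Sigma$) is the easy one: for a total Boolean algebra $B$, the poset $\cC(B)$ has $B$ itself as its largest element by Proposition~\ref{prop:structureCboolean}(b), so the diagram $C \mapsto \Sigma(C)$ over $\cC(B)\op$ has terminal object $\Sigma(B)$ at the top (limits are taken over $\cC(B)\op$, and the top of $\cC(B)$ is the bottom of $\cC(B)\op$). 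Hence $K(B) = \lim_{C \in \cC(B)\op} \Sigma(C) \cong \Sigma(B)$, since a limit of a diagram with an initial object is just the value there. One should check this is natural in $B$, which is routine since the inclusion $\{B\} \hookrightarrow \cC(B)$ is natural in $B$ for Boolean $B$.

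For the other triangle, I would argue that $\Cat{Loc}(-,\{0,1\})$ composed with the inclusion $\iota$ equals the $\Cat{Boolean}$-valued functor $\Cat{Loc}(-,\{0,1\})$ from Stone duality followed by $\iota$; but this is immediate, because $\Cat{Loc}(X,\{0,1\})$ is literally the same set with the same (total) Boolean structure whether we regard $\{0,1\}$ as a Boolean algebra or as the initial partial Boolean algebra $\nul$ — the partial operations on a total algebra are just the total ones. So the lower functor $\Cat{PBoolean} \leftarrow \Cat{Stone}\op$ factors through $\Cat{Boolean}$ on the nose, giving commutativity of the left-pointing serial composite.

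Alternatively, and more cleanly, I would deduce both triangles at once from the adjunctions. We have $K \dashv \Cat{Loc}(-,\{0,1\})$ and $\Sigma \dashv \Cat{Loc}(-,\{0,1\})$ (the latter being Stone duality), and the right adjoint $\Cat{Loc}(-,\{0,1\}) \colon \Cat{Stone}\op \to \Cat{PBoolean}$ visibly factors as $\iota$ after the Stone-duality right adjoint $\Cat{Stone}\op \to \Cat{Boolean}$. Since $\iota$ is fully faithful and the reflection $K \dashv \Cat{Loc}(-,\{0,1\})$ restricts along $\iota$ to an equivalence on the image (a Boolean algebra is a colimit of itself, as noted in the previous proof), uniqueness of adjoints forces $K \after \iota \simeq \Sigma$. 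The main obstacle — though it is minor — is bookkeeping the variance: $K(A)$ is defined as a limit over $\cC(A)\op$, so one must be careful that "largest element of $\cC(B)$" becomes "initial object of $\cC(B)\op$", over which the limit is computed as evaluation. Everything else is formal.
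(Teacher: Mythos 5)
Your proposal is correct and its core step---invoking Proposition~\ref{prop:structureCboolean}(b) to see that a Boolean algebra $B$ is the largest element of $\cC(B)$, hence initial in $\cC(B)\op$, so that the limit defining $K(B)$ collapses to $\Sigma(B)$---is exactly the paper's own proof. The extra verification of the second triangle and the alternative argument via uniqueness of adjoints are additions the paper leaves implicit.
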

\begin{proof}
  If $A$ is a Boolean algebra, it is the initial element in the
  diagram $\cC(A)\op$ by Proposition~\ref{prop:structureCboolean}(b).
  Hence $K(A) = \lim_{C \in \cC(A)\op} \Sigma(C) = \Sigma(A)$.
  \qed
\end{proof}

\begin{corollary}
  Boolean algebras form a reflective full subcategory of the category of
  partial Boolean algebras, \ie the inclusion $\Cat{Boolean} \hookrightarrow
  \Cat{PBoolean}$ has a left adjoint $L \colon \Cat{PBoolean} \to
  \Cat{Boolean}$.
\end{corollary}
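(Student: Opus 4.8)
The plan is to derive this as a formal consequence of the preceding reflection theorem, not to construct $L$ by hand. We have just shown that $\mathbf{Loc}(-,\{0,1\})\colon\mathbf{Stone}^{\mathrm{op}}\to\mathbf{PBoolean}$ has a left adjoint $K$, and that the composite with the Stone equivalence restricts on $\mathbf{Boolean}$ to the usual Stone functor $\Sigma$; in particular the unit of $K\dashv\mathbf{Loc}(-,\{0,1\})$ is an isomorphism on every object of $\mathbf{Boolean}$, which is exactly what ``reflection'' means here. First I would recall that $\mathbf{Boolean}$ is equivalent to $\mathbf{Stone}^{\mathrm{op}}$, so the inclusion $\iota\colon\mathbf{Boolean}\hookrightarrow\mathbf{PBoolean}$ factors (up to natural isomorphism) as $\mathbf{Loc}(-,\{0,1\})$ precomposed with this equivalence $\Sigma\colon\mathbf{Boolean}\xrightarrow{\sim}\mathbf{Stone}^{\mathrm{op}}$. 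Then I would set $L = \Sigma^{-1}\after K$, i.e.\ take the spectrum $K(A)$ of a partial Boolean algebra and read it back as a Boolean algebra via the inverse equivalence. Since left adjoints compose, $L$ is left adjoint to $\iota$, and one checks the unit is still an isomorphism on Boolean algebras, so $\mathbf{Boolean}$ sits inside $\mathbf{PBoolean}$ as a reflective full subcategory.

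Concretely, the unit $\eta_A\colon A\to L(A)$ is the component of the unit of $K\dashv\mathbf{Loc}(-,\{0,1\})$, which by the chain of bijective correspondences in the proof of the reflection is induced by the cocone $i_C\colon C\to A$ together with the Stone units $C\to\mathbf{Loc}(\Sigma(C),\{0,1\})$; in elementary terms $L(A)$ is the Boolean algebra $\mathbf{Loc}\bigl(\lim_{C\in\cC(A)^{\mathrm{op}}}\Sigma(C),\{0,1\}\bigr)$ and $\eta_A(a)$ is the clopen subset of the limit spectrum cut out by $a$ at the stage $\generated{A}{a}$. The universal property is inherited verbatim from the reflection: a morphism $A\to B$ of partial Boolean algebras into a (total) Boolean algebra $B$ corresponds, by Theorem~\ref{thm:colimbool} and Stone duality, to a compatible family $C\to B$ for $C\in\cC(A)$, hence to a unique $L(A)\to B$ factoring $\eta_A$.

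The only point requiring a little care — and the one I would flag as the main obstacle, though it is minor — is checking that the inclusion $\iota$ really is (isomorphic to) the composite $\mathbf{Loc}(-,\{0,1\})\after\Sigma$ as functors on $\mathbf{Boolean}$, rather than merely object-wise; this is just the statement that Stone duality~\eqref{eq:stone} is a natural equivalence, so it is immediate, but it is what licenses transporting the adjunction along the equivalence. Once that is in place, everything else is formal nonsense: a full and faithful right adjoint whose essential image is $\mathbf{Boolean}$ exhibits $\mathbf{Boolean}$ as a reflective subcategory, and fullness/faithfulness of $\iota$ is clear since morphisms in $\mathbf{PBoolean}$ between total Boolean algebras are exactly Boolean homomorphisms (commeasurability being total on both sides).
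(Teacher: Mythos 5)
Your proposal is correct and follows essentially the same route as the paper, which likewise obtains $L=\Cat{Loc}(-,\{0,1\})\after K$ by composing the reflection $K\dashv\Cat{Loc}(-,\{0,1\})$ with the Stone equivalence, using the serial commutativity of the triangle from the preceding theorem to identify the composite right adjoint with the inclusion. The extra details you supply (the explicit unit and the fullness of the inclusion) are consistent with, and merely elaborate on, the paper's one-line argument.
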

\begin{proof}
  The adjunctions of the previous theorem compose, giving the required
  left adjoint as $L=\Cat{Loc}(-,\{0,1\}) \after K$.
  \qed
\end{proof}

\section{Partial C*-algebras}
\label{sec:cstar}

The definitions of partial C*-algebras and their morphisms closely
resemble those of partial Boolean algebras. Indeed, both are
instances of the partial algebras of Kochen and Specker, over the
fields $\field{Z}_2$ and $\field{C}$, respectively. However, partial
C*-algebras also have to account for a norm and involution, calling
for some changes that we now spell out.

\begin{definition}
\label{def:pcstar}
  A \emph{partial C*-algebra} consists of a set $A$ with
  \begin{itemize}
    \item a reflexive and symmetric binary
      (\emph{commeasurability}) relation $\commeas \subseteq A \times A$;
    \item elements $0,1 \in A$;
    \item a (total) involution $* \colon A \to A$;
    \item a (total) function $\cdot \colon \field{C} \times A \to A$;
    \item a (total) function $\norm{-}: A \to \field{R}$;
    \item (partial) binary operations $+, \cdot \colon  \commeas \to A$;
  \end{itemize}
  such that every set $S \subseteq A$ of pairwise commeasurable
  elements is contained in a set $T \subseteq A$, whose elements are
  also pairwise commeasurable, and on which the above operations
  determine the structure of a commutative C*-algebra.\footnote{This
  entails that $T$ contains 0 and 1, is closed under all 
  algebraic operations, and is norm-complete.} 
\end{definition}

It follows from the last condition in the definition of a partial
C*-algebra that commeasurable elements in a partial C*-algebra have to
commute. In fact, as with partial Boolean algebras, partial
C*-algebras whose commeasurability relation is total are nothing but
\emph{commutative} C*-algebras.

We again define the notion of a \emph{commeasurable} or \emph{commutative} subalgebra of a partial C*-algebra $A$ in the obvious way as a subset $T$ of $A$ of pairwise commeasurable elements on which the operations of $A$ determine a commutative C*-algebra structure. Also, if $S$ is subset of pairwise commeasurable elements, then again there must be a \emph{smallest} commeasurable subalgebra $T$ that contains $S$: simply take the intersection of all such subalgebras $T$. Alternatively, one can construct it as the set of those elements in $A$ that are limits of sequences whose terms are algebraic expressions involving the elements of $S$. We denote it by $\generated{A}{S}$.

The reader may be tempted to believe that every noncommutative
C*-algebra can be regarded as a partial C*-algebra by declaring that
$a \commeas b$ holds whenever $a$ and $b$ commute, but that would be
incorrect. The reason for this is that we have required $\commeas$
to be reflexive, so that $aa^* = a^*a$ holds for every element $a$ in a partial C*-algebra. Now, an element $a$ such that $aa^* = a^*a$ holds is called \emph{normal} and it is not the case that every element in a C*-algebra is normal.

What is true, however, is that one may regard the collection of normal
elements of a C*-algebra as a partial C*-algebra by declaring two
elements to be commeasurable whenever they commute. In fact, taking
normal elements is part of a functor, if we consider the
following class of morphisms of partial C*-algebras.

\begin{definition}
  A \emph{partial *-morphism} is a (total) function $f \colon A \to B$
  between partial C*-algebras such that:
  \begin{itemize}
    \item $f(a) \commeas f(b)$ for commeasurable $a,b \in A$;
    \item $f(ab)=f(a)f(b)$ for commeasurable $a,b \in A$;
    \item $f(a+b)=f(a)+f(b)$ for commeasurable $a,b \in A$;
    \item $f(za) = zf(a)$ for $z \in \field{C}$ and $a \in A$;
    \item $f(a)^* = f(a^*)$ for $a \in A$;
    \item $f(1)=1$.
  \end{itemize}
  Partial C*-algebras and partial *-morphisms organize themselves into
  a category denoted by $\Cat{PCstar}$.\footnote{Most results hold for
  nonunital C*-algebras, but for convenience we consider unital ones.}
\end{definition}

Before we embark on proving that taking normal parts provides a
functor from the category of C*-algebras to the category of partial
C*-algebras, recall that an element $a$ of a C*-algebra is called
self-adjoint when $a=a^*$, and that any element can be written
uniquely as a linear combination $a=a_1+ia_2$ of two self-adjoint
elements $a_1=\frac{1}{2}(a+a^*)$ and $a_2=\frac{1}{2i}(a-a^*)$.

\begin{proposition}
\label{prop:functorN}
  There is a functor $N \colon \Cat{Cstar} \to \Cat{PCstar}$ which
  sends every C*-algebra to its normal part
  \[
    N(A) = \{ a \in A \mid aa^* = a^*a \},
  \]
  which can be considered as a partial C*-algebra by saying that $a
  \commeas b$ holds whenever $a$ and $b$ commute. Moreover, $N$ is
  faithful and reflects isomorphisms and identities.
\end{proposition}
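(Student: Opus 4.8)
The plan is to verify three things. First, that $N(A)$, with $a\commeas b$ declared to mean $ab=ba$, really is a partial C*-algebra. Second, that $N$ is functorial, acting on a unital *-homomorphism $f\colon A\to B$ by restriction to normal parts. Third, that faithfulness and the two reflection properties follow from the decomposition $a=a_{1}+ia_{2}$ of an element into its self-adjoint (hence normal) parts, together with the $\field{C}$-linearity of *-homomorphisms.

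For the first part, most of Definition~\ref{def:pcstar} is immediate: $0$ and $1$ are central, hence normal; $N(A)$ is closed under $*$ and under scalar multiplication; and $\commeas$, being commutativity, is reflexive and symmetric. (That one must restrict to normal elements is forced by the closure condition itself: applied to a singleton $\{a\}$ it already requires $a$ and $a^{*}$ to lie in a common commutative C*-subalgebra, \ie $aa^{*}=a^{*}a$.) The one point of substance --- and the step I expect to be the main obstacle --- is the closure condition in general. Given $S\subseteq N(A)$ whose elements pairwise commute, I would take $T$ to be the unital C*-subalgebra of $A$ generated by $S$, and the crucial claim is that $T$ is commutative. This is where the Fuglede--Putnam theorem does the real work: from $st=ts$ for $s,t\in S$ one gets $st^{*}=t^{*}s$, and $s^{*}t^{*}=t^{*}s^{*}$ by taking adjoints, so $S\cup S^{*}\cup\{1\}$ consists of mutually commuting elements; hence so does the *-subalgebra it generates, and hence so does its norm-closure $T$. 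A commutative C*-algebra consists entirely of normal elements, so $T\subseteq N(A)$, every pair of elements of $T$ is commeasurable, and the operations of $N(A)$ restrict on $T$ to the commutative C*-algebra structure the definition demands.

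For the second part, $f$ sends normal elements to normal elements since $f(a)f(a)^{*}=f(aa^{*})=f(a^{*}a)=f(a)^{*}f(a)$, and the restriction $N(f)=f|_{N(A)}$ satisfies each clause in the definition of a partial *-morphism directly from the corresponding property of $f$ --- preservation of products, sums, the unit and the involution, $\field{C}$-linearity, and $f(a)f(b)=f(ab)=f(ba)=f(b)f(a)$ for commeasurable $a,b$ --- while preservation of identities and composites is clear. For the third part: if $N(f)=N(g)$ then $f$ and $g$ agree on self-adjoint elements, hence on all of $A$, so $N$ is faithful. If $N(f)$ is an isomorphism in $\Cat{PCstar}$, it is in particular bijective; then $f$ is injective (from $f(a)=0$ one gets $f(a_{1})=f(a_{2})=0$ by uniqueness of the decomposition, whence $a_{1}=a_{2}=0$ because $N(f)$ is injective) and surjective (given $b=b_{1}+ib_{2}$, surjectivity of $N(f)$ yields $a_{j}\in N(A)$ with $f(a_{j})=b_{j}$, each such $a_{j}$ is self-adjoint since $f(a_{j}^{*})=f(a_{j})^{*}=b_{j}=f(a_{j})$ and $f$ is injective, and then $f(a_{1}+ia_{2})=b$), and a bijective unital *-homomorphism of C*-algebras is an isomorphism; the same decomposition argument shows that if $N(f)$ is an identity morphism then $f$ fixes every self-adjoint element and is therefore an identity morphism itself. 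Thus the one genuinely non-formal ingredient in the whole argument is the commutativity of the generated C*-subalgebra, supplied by Fuglede--Putnam.
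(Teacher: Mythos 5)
Your proof is correct and follows essentially the same route as the paper's: normality and pairwise commutativity of a generating set force commutativity of the generated C*-subalgebra, $N$ acts on morphisms by restriction and corestriction, and the decomposition $a=a_1+ia_2$ into self-adjoint parts yields faithfulness and the reflection of isomorphisms and identities. The only real difference is that you make explicit the Fuglede--Putnam theorem underlying the commutativity of the generated subalgebra, a step the paper asserts in a single line without attribution.
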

\begin{proof}
  The action of $N$ is well-defined on objects, since a subalgebra of a
  C*-algebra generated by a set $S$ is commutative iff the elements of
  $S$ are normal and commute pairwise. 

  On morphisms, $N$ acts by restriction and corestriction. To see that
  it has the properties the proposition claims it has, one uses the
  identity $a=a_1+ia_2$. For example, suppose $N(f)$ is surjective for
  a *-morphism $f \colon A \to B$ and let $b \in B$. Then there are
  $a_1,a_2 \in N(A)$ with $f(a_i)=b_i$. Hence $f(a_1+ia_2)=b$, so that
  $f$ is surjective. Similarly, if $N(f)$ is injective, suppose that
  $f(a)=f(a')$. Then $f(a_i)=f(a_i')$, and hence $a_i=a_i'$, so that
  $a=a'$ and $f$ is injective. Now, isomorphisms in
  ($\Cat{P}$)$\Cat{Cstar}$ are bijective (partial) *-morphisms.
  Hence $f$ is an isomorphism when $N(f)$ is.
\qed
\end{proof}

So therefore one way of thinking about a partial C*-algebra is as
axiomatizing the normal part of a C*-algebra. Of course, we could have
decided to drop the requirement that $\commeas$ is reflexive, so that every C*-algebra would also be a partial C*-algebra,
with commeasurability given by commutativity. We haven't done this for
various reasons. First of all, we would like to have a notion given in
terms of the physically relevant data. On Bohr's philosophy, which we
adopt in this paper, the physically relevant information is contained
in the normal part of a C*-algebra. Related to this is the fact that
the Bohrification functor (which we will study in
Section~\ref{sec:bohrification}) only takes the
normal part of a C*-algebra into account. Secondly, we wish to have a
result saying how a partial C*-algebra is determined by its
commutative subalgebras analogous to our result for partial Boolean
algebras. With our present definition we do indeed have this result
(it is Theorem \ref{thm:colimcstar} below), as we will now explain.

Denote by $\cC \colon \Cat{PCstar} \to \Cat{POrder}$ the functor
assigning to a partial C*-algebra $A$ the collection of its
commeasurable (\ie commutative total) subalgebras $\cC(A)$, partially
ordered by inclusion. One immediately derives similar properties for
the diagram $\cC(A)$ in the category $\Cat{PCstar}$ as for partial
Boolean algebras.

\begin{proposition}
\label{prop:structureCcstar}
  Let $A$ be a partial C*-algebra.
  \begin{enumerate}
    \item[(a)] The least element of the poset $\cC(A)$ is
      $\generated{A}{0} = \generated{A}{1} = \{ z \cdot 1 \mid
      z \in \field{C} \}$.
    \item[(b)] The poset $\cC(A)$ is filtered if and only if $A$
      is a commutative C*-algebra. \\ In that case, $A$ is the largest
      element of the poset $\cC(A)$.
  \end{enumerate}
  \qed
\end{proposition}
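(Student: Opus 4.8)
The plan is to follow the proof of Proposition~\ref{prop:structureCboolean} essentially line for line, since the diagram $\cC(A)$ behaves for partial C*-algebras exactly as it does for partial Boolean algebras; everything reduces to two observations about $\cC(A)$: it is nonempty with a concrete least element, and singletons generate commeasurable subalgebras.

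For part (a), I would first note that every $C \in \cC(A)$ is a norm-closed unital $*$-subalgebra of $A$, hence contains $0$ and $1$ and is closed under the scalar action, so that $\{z\cdot 1 \mid z\in\field{C}\} \subseteq C$. Conversely, $\{z\cdot 1 \mid z\in\field{C}\}$ is itself closed under $+$, $\cdot$, $*$ and the scalar action, is norm-complete (being isometrically isomorphic to $\field{C}$), and is commutative; so it is a commeasurable subalgebra, and the two inclusions show it is the smallest one containing $0$, and equally the smallest containing $1$. Hence $\generated{A}{0} = \generated{A}{1} = \{z \cdot 1 \mid z\in\field{C}\}$, and it lies below every element of $\cC(A)$.

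For the ``if'' direction of (b): if $A$ is a commutative C*-algebra then the whole set $A$ is a set of pairwise commeasurable elements carrying a commutative C*-algebra structure, so $A \in \cC(A)$ is a top element; a poset with a top element is trivially filtered (it is nonempty by part (a), and the top bounds every finite family), and that top is by definition the largest element. For the ``only if'' direction: assume $\cC(A)$ is filtered and take arbitrary $a,b \in A$. Since $\commeas$ is reflexive, $\{a\}$ and $\{b\}$ are sets of pairwise commeasurable elements, so $\generated{A}{a}$ and $\generated{A}{b}$ lie in $\cC(A)$; filteredness provides $C \in \cC(A)$ with $\generated{A}{a}\subseteq C$ and $\generated{A}{b} \subseteq C$, whence $a,b \in C$, and as the elements of $C$ are pairwise commeasurable, $a \commeas b$. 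Thus $\commeas = A\times A$, and by the remark following Definition~\ref{def:pcstar} this makes $A$ a commutative C*-algebra, which by the previous case is the largest element of $\cC(A)$.

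I expect no genuine obstacle here: once (a) is in place the argument for (b) is purely order-theoretic. The only point deserving care is that ``subalgebra'' in the C*-setting means a norm-closed unital $*$-subalgebra, so one should check that $\{z\cdot 1\mid z\in\field{C}\}$ really meets all the closure and completeness conditions, and that the construction of $\generated{A}{-}$ (intersection of all qualifying $T$, equivalently the norm-closure of the algebraic expressions in the generators) yields exactly this set on the generators $0$ and $1$.
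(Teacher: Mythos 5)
Your proof is correct and is exactly the routine verification the paper has in mind: the paper states this proposition without proof (remarking only that one ``immediately derives'' it as for partial Boolean algebras), and your argument --- $\{z\cdot 1\}$ is the smallest commeasurable subalgebra, a top element gives filteredness, and filteredness plus reflexivity of $\commeas$ forces total commeasurability --- fills in those details faithfully.
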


We now prove the C*-algebra version of our main result.

\begin{theorem}
\label{thm:colimcstar}
  Every partial C*-algebra is a colimit of its (finitely generated)
  commutative C*-subalgebras.
\end{theorem}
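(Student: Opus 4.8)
The plan is to follow the same strategy as in Theorem~\ref{thm:colimbool}: exhibit the inclusion maps as a cocone and show it is universal, the only difference being that the algebraic structure to check is richer (involution, scalar multiplication, norm, and the partial operations $+$ and $\cdot$). Concretely, let $A$ be a partial C*-algebra and let $\cC(A)$ be its diagram of (finitely generated) commutative C*-subalgebras, with inclusion maps $i_C \colon C \to A$. These are partial *-morphisms by construction, and they clearly commute with the inclusions $C \hookrightarrow C'$, so they form a cocone. Given any competing cocone $f_C \colon C \to B$, define $m \colon A \to B$ by $m(a) = f_{\generated{A}{a}}(a)$, where $\generated{A}{a}$ is the smallest commutative C*-subalgebra containing $a$ (this exists by the discussion preceding Proposition~\ref{prop:structureCcstar}). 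As in the Boolean case, the cocone condition forces $m(a) = f_C(a)$ for \emph{any} $C$ containing $a$, which is what makes everything below go through.

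The heart of the argument is checking that $m$ is a partial *-morphism. For the unary data this is immediate: $m(za) = z\,m(a)$, $m(a^*) = m(a)^*$ and $m(1)=1$ all follow from applying the corresponding clause of "$f_{\generated{A}{a}}$ is a partial *-morphism" (note $\generated{A}{a}$ contains $za$, $a^*$ and $1$, being a C*-subalgebra). Preservation of the norm follows the same way if one includes it among the structure, or else follows automatically since partial *-morphisms between commutative C*-algebras are automatically contractive. For the binary data, suppose $a \commeas b$. Then $\generated{A}{a,b}$ is a commutative C*-subalgebra containing $a$, $b$, $a+b$ and $ab$; using that $m$ agrees with $f_{\generated{A}{a,b}}$ on all of these elements (again by the cocone property, exactly as in the displayed computation in the proof of Theorem~\ref{thm:colimbool}), the equations $m(a)\commeas m(b)$, $m(a+b)=m(a)+m(b)$ and $m(ab)=m(a)m(b)$ reduce to the fact that $f_{\generated{A}{a,b}}$ is a partial *-morphism. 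The clause $f(a+ib)=f(a)+if(b)$ for self-adjoint $a,b$ is handled identically, since self-adjoint elements commute with themselves are needn't commute with each other, but $a+ib$ still lies in $\generated{A}{a,b}$ and the identity $a+ib=(a+ib)$ decomposed via $m(a+ib)=m(a)+im(b)$ follows once we know $a\commeas b$ fails to be needed — we only need $a,b$ self-adjoint, and $\generated{A}{a} , \generated{A}{b}$ together with the $\field{C}$-linearity clause suffice. Finally, $f_C = m \after i_C$ holds because for $a \in C$ we have $m(a)=f_{\generated{A}{a}}(a)=f_C(a)$, and uniqueness of $m$ is forced because any mediating morphism must send $a$ to $f_{\generated{A}{a}}(a)$.

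I expect the only genuine subtlety — and hence the step to treat with a little care — to be the clause $f(a+ib)=f(a)+if(b)$ for self-adjoint $a,b$, because two self-adjoint elements need not be commeasurable, so $\generated{A}{a,b}$ need not exist. The resolution is that this clause never requires a common subalgebra: one writes $m(a+ib)$ and uses that $a+ib$ is normal (its self-adjoint parts $a$ and $b$ commute with it? — no), so more carefully, one observes that in a partial C*-algebra the element $a+ib$ for self-adjoint $a,b$ need not be normal either, hence this clause is really part of the \emph{data} of how $+$ interacts with $\field{C}$-action and must simply be transported from the corresponding clause for $f_{\generated{A}{a+ib}}$ after noting $a+ib \in \generated{A}{a+ib}$ and $a,b$ are recovered from it as $a = \tfrac{1}{2}((a+ib)+(a+ib)^*)$, $b = \tfrac{1}{2i}((a+ib)-(a+ib)^*)$, so $a,b,a+ib$ all lie in the single commutative C*-subalgebra $\generated{A}{a+ib}$. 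With that observation the verification is routine and the proof is otherwise a direct transcription of Theorem~\ref{thm:colimbool}.
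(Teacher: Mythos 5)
Your proposal is correct and follows the paper's overall strategy (inclusions form a cocone; the competing cocone legs $f_C$ determine a mediating map; the morphism clauses are verified one by one), but the mediating map is defined differently. The paper sets $m(a) = f_{\generated{A}{a_1}}(a_1) + if_{\generated{A}{a_2}}(a_2)$ via the decomposition $a = a_1 + ia_2$ into self-adjoint parts, and then verifies each clause by an explicit real/imaginary-part computation; you set $m(a) = f_{\generated{A}{a}}(a)$ directly, exploiting the fact that reflexivity of $\commeas$ forces every element of a partial C*-algebra to be normal, so that $\generated{A}{a}$ always exists and lies in the diagram. The two definitions agree: since $a_1, a_2 \in \generated{A}{a}$, the cocone condition and the clauses for $f_{\generated{A}{a}}$ give $f_{\generated{A}{a}}(a) = f_{\generated{A}{a_1}}(a_1) + if_{\generated{A}{a_2}}(a_2)$. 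What your version buys is a shorter verification (each clause reduces, via the cocone condition, to the corresponding clause for a single $f_{\generated{A}{S}}$ with $S$ a suitable commeasurable set) and an immediate uniqueness argument: any mediating $m'$ satisfies $m'(a) = m' \after i_{\generated{A}{a}}(a) = f_{\generated{A}{a}}(a)$ without needing to pass through $a_1$ and $a_2$ at all. One caution: the sentence in your second paragraph claiming that for merely self-adjoint $a,b$ the element $a+ib$ ``still lies in $\generated{A}{a,b}$'' is wrong as written, since $\generated{A}{a,b}$ need not exist when $a \not\commeas b$; but your final paragraph correctly repairs this by observing that whenever $a+ib$ exists in $A$ it is normal, and $\generated{A}{a+ib}$ already contains $a = \tfrac{1}{2}((a+ib)+(a+ib)^*)$ and $b = \tfrac{1}{2i}((a+ib)-(a+ib)^*)$, so the whole clause lives inside one commutative subalgebra. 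Strike the earlier garbled sentence and keep only that final argument, and the proof stands.
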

\begin{proof}
  Let $A$ be a partial C*-algebra, and consider its diagram $\cC(A)$
  of (finitely generated) commutative C*-subalgebras $C$. Defining
  functions $i_C \colon C
  \to A$ by the inclusions yields a cocone in $\Cat{PCstar}$; we will
  prove that this cocone is universal. If $f_C \colon C \to B$ is
  another cocone, define a function $m \colon A \to B$ by
  $
    m(a) = f_{\generated{A}{a}}(a)
  $.
  Precisely as in Theorem~\ref{thm:colimbool}, it now follows from the
  assumption that the $f_C$ are morphisms of $\Cat{PCstar}$ that $m$
  is a well-defined morphism, too.  
  One easily verifies that $f_C = m
  \after i_C$, and that $m$ is the unique such morphism.
  \qed
  \qed
\end{proof}

Together, Theorem~\ref{thm:colimcstar} above and
Theorem~\ref{thm:pcstarcocomplete} below show
that, up to partial *-isomorphism, every C*-algebra can be
reconstructed from its commutative C*-subalgebras, lending force to the Bohrification programme. In this light
Theorem~\ref{thm:colimcstar} could be said to embody a categorical
crude version of complementarity.

\begin{remark} 
\label{remark:disclaimer}
  We hasten to point out that Theorem
  \ref{thm:colimcstar} only works because of the way we have set
  things up. In particular, it would fail if we would drop the
  requirement that $\commeas$ is reflexive. Also, it does not say that
  every C*-algebra is the colimit of its commutative subalgebras in
  the category $\Cat{Cstar}$, which would be false. 

  The reason why Theorem \ref{thm:colimcstar} cannot be changed in these
  ways is that are nonisomorphic von Neumann algebras $A$ and $B$ for
  which $\cC(A)$ and $\cC(B)$ \emph{are} isomorphic. (This follows from
  the work of Connes in \cite{connes:factornotantiisomorphic}. For the
  experts, the argument is this: in
  \cite{connes:factornotantiisomorphic} it is shown that there is a von
  Neumann algebra $A$ that is not 
  anti-isomorphic to itself; since
  this $A$ is in standard form, \ie has a separating cyclic vector, it
  follows that $A$ is not isomorphic to its commutant $A'$. But
  Tomita-Takesaki theory shows that any such von Neumann algebra $A$ is
  anti-isomorphic to its commutant $A'$, whence $\cC(A) \cong 
  \cC(A')$.) Note that this, combined with Theorem~\ref{thm:colimcstar},
  implies that there is a partial *-isomorphism $N(A) \to N(B)$ that is
  not of the form $N(f)$ for some $f \colon A \to B$. To put it another
  way, the faithful functor $N$ is not full. 

  Because of this the relevance of the current work to the theory of
  C*-algebras proper is rather limited. But that is not our immediate
  aim: this paper primarily wishes to gain a better conceptual understanding of the
  Bohrification programme; and, as we have argued
  above, from that perspective the way we have set things up is very
  natural and Theorem \ref{thm:colimcstar} is a step forward. 
\end{remark}

We close this section with a discussion of completeness and
cocompleteness properties of $\Cat{PCstar}$. It is known that the
category of C*-algebras is both complete and cocomplete (for
coproducts, see~\cite{rainjonneau:coproductscstar}, and for
coequalizers, see~\cite{guichardet:vonneumann}). As it turns out, also
$\Cat{PCstar}$ is both complete and cocomplete.

$\Cat{PCstar}$ is complete, as it has both equalizers and arbitrary
products. Equalizers of partial C*-algebras are constructed as in
$\Cat{Set}$, having inherited commeasurability and subalgebra
structure. Products are given by $\prod_i A_i = \{ (a_i)_i \mid a_i
\in A_i, \sup_i \|a_i\| < \infty \}$, with componentwise
commeasurability and algebraic structure.
In particular, the terminal object $\een$ is the 0-dimensional
(partial) C*-algebra $\{0\}$, confusingly sometimes also denoted by 0.

The coproduct of a family $A_i$ of partial C*-algebras is got
by taking their disjoint union, identifying for every $z \in
\field{C}$ the elements of the form  $z1_i$. Notice that elements from
different summands $A_i$ are never commeasurable in the coproduct.
In particular, the initial object $\nul$ is the 1-dimensional
(partial) C*-algebra $\field{C}$, which is confusingly sometimes also
denoted by 1.

Coequalizers are harder to describe constructively, but they do exist.

\begin{theorem}
\label{thm:pcstarcocomplete}
  The category $\Cat{PCstar}$ is complete and cocomplete.
\end{theorem}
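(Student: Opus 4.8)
The plan is to reuse, for the cocompleteness half, the strategy of Theorem~\ref{thm:PBoolcocomplete}; completeness and the existence of coproducts have already been established above, so the only thing left to produce is coequalizers. Since $\Cat{PCstar}$ is locally small and complete and the diagonal functor $\Delta\colon\Cat{PCstar}\to\Cat{PCstar}^{(\cdot\rightrightarrows\cdot)}$ preserves limits, Freyd's adjoint functor theorem~\cite[V.6]{maclane:categorieswork} reduces the existence of a left adjoint to $\Delta$ (\ie of coequalizers) to the following solution set condition: for each parallel pair $f,g\colon A\to B$ there is a \emph{set} of partial *-morphisms $h_i\colon B\to Q_i$ with $h_i f=h_i g$ through which every partial *-morphism $h\colon B\to Q$ with $h f=h g$ factors as $h=k\after h_i$.

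I would take the solution set to be all partial *-morphisms $h\colon B\to Q$ with $h f=h g$ whose codomain has cardinality at most a fixed cardinal $\kappa$ depending only on $A$ and $B$ (specified below); since there is only a set of partial C*-algebra structures on a set of size $\le\kappa$, and only a set of functions out of $B$ into each, this is indeed a set. Given an arbitrary $h\colon B\to Q$ with $h f=h g$, I would replace $Q$ by $Q'=\Generated{Q}{h(B)}$, the smallest partial C*-subalgebra of $Q$ containing the image of $h$. Such a smallest subalgebra exists because an arbitrary intersection of partial C*-subalgebras of $Q$ is again one: the only nontrivial point is the extension axiom, and if $S$ is a pairwise commeasurable set contained in each $D_j$, then $\generated{Q}{S}$, being the least commutative C*-subalgebra of $Q$ containing $S$, lies in every $D_j$ and hence in their intersection. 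Then $h$ corestricts to a partial *-morphism $\bar h\colon B\to Q'$, we still have $\bar h f=\bar h g$ because the inclusion $Q'\hookrightarrow Q$ is monic, and $h$ factors through $\bar h$. So the solution set condition would follow once I know that $|Q'|\le\kappa$ for some $\kappa$ independent of $Q$.

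The remaining and genuinely new task, and the step I expect to be the main obstacle, is this uniform cardinality bound on $\Generated{Q}{h(B)}$. One builds this subalgebra by transfinitely iterating the closure of $h(B)\cup\{0,1\}$ under involution, scalar multiplication, the partial sum and product of commeasurable pairs, and the adjunction of $\generated{Q}{S}$ for every pairwise commeasurable subset $S$ of the current stage; each successor step enlarges the cardinality by at most an exponential, because $|\generated{Q}{S}|$ is controlled by $|S|$ and $2^{\aleph_0}$ (the $*$-algebra algebraically generated by $S$ has at most $\max(|S|,2^{\aleph_0})$ elements and its norm closure at most $\max(|S|,2^{\aleph_0})^{\aleph_0}$ of them), so the process stabilizes below a cardinal $\kappa$ determined by $|B|$ rather than by $|Q|$. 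The delicate point is the norm-completeness clause in the definition of a partial C*-algebra: the iteration must be carried far enough that the stabilized set already contains $\generated{Q}{S}$ for infinite pairwise commeasurable $S$ whose generated commutative C*-algebra is only reached through norm limits, and whose members may have been introduced at cofinal stages, so that checking stabilization requires some care with the interplay between norm limits and commeasurability that is absent from the Boolean setting of Theorem~\ref{thm:PBoolcocomplete}. Once the uniform bound $\kappa$ is in hand, Freyd's theorem yields all coequalizers, and together with the coproducts already described this shows $\Cat{PCstar}$ is cocomplete.
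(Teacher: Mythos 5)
Your overall strategy coincides with the paper's: both reduce cocompleteness to the existence of coequalizers, and both obtain coequalizers from Freyd's adjoint functor theorem via a solution set condition, exactly as in Theorem~\ref{thm:PBoolcocomplete}. Where you diverge is in how an arbitrary $h \colon B \to Q$ with $hf = hg$ is cut down to a member of the solution set. The paper factors $h$ through the closure of its image and takes as solution set the maps with dense image; the cardinality bound is then immediate, since a C*-algebra with a dense subset of cardinality $\lambda$ has at most $\lambda^{\aleph_0}$ elements, and no transfinite closure argument is needed. You instead factor through the partial C*-subalgebra of $Q$ generated by $h(B)$. This is a legitimate alternative, and in one respect more robust: with the commeasurability relation restricted from $Q$, your $Q'$ is unambiguously a partial C*-subalgebra (your intersection argument for this is correct), whereas for the closure of the image one still has to say which commeasurability relation it carries and why the extension axiom holds for it --- a point the paper's one-line proof does not address.

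The price you pay is the uniform cardinality bound, and there your argument as written has a gap: ``each successor step enlarges the cardinality by at most an exponential, so the process stabilizes below a cardinal determined by $|B|$'' is a non sequitur --- iterating $\lambda \mapsto 2^{\lambda}$ never stabilizes, and you have not exhibited a closure ordinal independent of $Q$. The fix is the observation you half-make yourself: every element of $\generated{Q}{S}$ is a norm limit of $*$-polynomials in finitely many elements of $S$, so $\generated{Q}{S}$ is the union of $\generated{Q}{S_0}$ over countable $S_0 \subseteq S$. Hence at each stage one only ever needs to adjoin the (at most continuum-sized) separable algebras $\generated{Q}{S_0}$ for countable $S_0$ of the current stage, every stage then has cardinality at most $\max(|B|, 2^{\aleph_0})^{\aleph_0}$, and by regularity of $\omega_1$ the iteration is closed at stage $\omega_1$: any countable pairwise commeasurable subset of the union is already contained in an earlier stage. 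With $\kappa = \max(|B|,2^{\aleph_0})^{\aleph_0}$ so justified, the rest of your argument goes through.
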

\begin{proof}
  To show that $\Cat{PCstar}$ has coequalizers, the same strategy as in
  the proof of Theorem~\ref{thm:PBoolcocomplete} applies, because for
  every partial C*-algebra $A$ the collection of isomorphism classes
  of partial *-maps $f: A \to B$ such that $f(B)$ is dense in $A$ form
  a set and every partial C*-algebra map with domain $A$ factors
  through a map of this form.
  \qed
\end{proof}

\subsection{Variations}

Theorem~\ref{thm:colimcstar} holds for many varieties of (partial)
C*-algebras, as its proof only depends on (partial) algebraic
properties. Let us consider (partial) Rickart C*-algebras as an
example. Recall that a commutative C*-algebra $A$ is Rickart
when every $a \in A$ has a unique projection $\RP(a) \in A$ such that
$(1-\RP(a)) \cdot A$ is the right annihilator $\{ b \in A \mid ab = 0
\}$. We call a partial C*-algebra $A$ together with a total map
\[
  \RP \colon A \to A
\]
a \emph{partial Rickart C*-algebra} when every pairwise commeasurable
$S \subseteq A$ is contained in a pairwise commeasurable $T \subseteq
A$ on which the operations of $A$ yield a commutative Rickart
C*-algebra structure with RP's given by the function above.
Denote the subcategory of $\Cat{PCstar}$ whose objects
are partial Rickart C*-algebras and whose morphism are partial
*-morphisms that preserve RP by $\Cat{PRickart}$.

\begin{theorem}
  Every partial Rickart C*-algebra is the colimit of its
  commutative Rickart C*-subalgebras.
\end{theorem}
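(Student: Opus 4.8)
The plan is to follow the template of the proof of Theorem~\ref{thm:colimcstar} essentially verbatim, since the argument there used only the fact that $\generated{A}{S}$ is well-defined and that the cocone maps agree on overlapping commutative subalgebras. Concretely, let $A$ be a partial Rickart C*-algebra and consider its diagram $\cC(A)$ of (finitely generated) commutative Rickart C*-subalgebras, with inclusions $i_C \colon C \to A$. (One should first note that for a subset $S$ of pairwise commeasurable elements the smallest commutative Rickart C*-subalgebra $\generated{A}{S}$ containing $S$ exists, by intersection, exactly as in the $\Cat{PCstar}$ case; here one must check the intersection of Rickart subalgebras is again Rickart with RP's given by the restriction of the global $\RP$, which is immediate because membership of $\RP(a)$ is a property preserved under intersection once $a$ lies in all the subalgebras.) These inclusions form a cocone in $\Cat{PRCstar}$, and given any other cocone $f_C \colon C \to B$ one defines $m \colon A \to B$ by $m(a) = f_{\generated{A}{a_1}}(a_1) + i f_{\generated{A}{a_2}}(a_2)$, writing $a = a_1 + i a_2$ with $a_1,a_2$ self-adjoint.

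The verification that $m$ is a partial *-morphism is word-for-word the computation in the proof of Theorem~\ref{thm:colimcstar}: preservation of scalar multiplication, addition, multiplication, involution and the unit all reduce to the cocone condition applied inside a sufficiently large finitely generated commutative Rickart C*-subalgebra such as $\generated{A}{a_1,a_2,b_1,b_2}$. The only genuinely new clause to check is that $m$ preserves $\RP$. For this, note that $\RP(a)$ is a projection lying in the commutative Rickart C*-subalgebra $\generated{A}{a}$ (indeed $\RP(a) \in \generated{A}{a_1,a_2}$), so $\generated{A}{a}$ contains $a$, $a_1$, $a_2$ and $\RP(a)$ and is a commutative Rickart C*-algebra; applying the morphism $f_{\generated{A}{a}}$, which preserves $\RP$ by hypothesis, and using that $m$ restricted to $\generated{A}{a}$ equals $f_{\generated{A}{a}} \after i_{\generated{A}{a}}$ (which follows from the cocone identities and uniqueness of the decomposition $a = a_1 + ia_2$), we get $m(\RP(a)) = f_{\generated{A}{a}}(\RP(a)) = \RP(f_{\generated{A}{a}}(a)) = \RP(m(a))$.

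Finally one checks $f_C = m \after i_C$ for every $C$ and uniqueness of $m$. Both are exactly as before: on a finitely generated commutative Rickart C*-subalgebra $C$, every element $c$ has $c_1, c_2 \in C$ and $\generated{A}{c_i} \subseteq C$, so the cocone condition gives $m(c) = f_{\generated{A}{c_1}}(c_1) + i f_{\generated{A}{c_2}}(c_2) = f_C(c_1) + i f_C(c_2) = f_C(c)$; and any morphism $m'$ with $m' \after i_C = f_C$ must satisfy $m'(a_1) = f_{\generated{A}{a_1}}(a_1)$ and $m'(a_2) = f_{\generated{A}{a_2}}(a_2)$, hence $m'(a) = m'(a_1) + i m'(a_2) = m(a)$.

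I do not expect any real obstacle here; the proof is deliberately an ``analogous to Theorem~\ref{thm:colimcstar}'' argument, and the only point requiring a sentence of its own is the $\RP$-preservation clause for the mediating morphism, which works precisely because $\RP(a)$ is forced to live inside the single-generator subalgebra $\generated{A}{a}$ and the cocone maps preserve $\RP$. The one thing to be slightly careful about is the existence of $\generated{A}{S}$ as a Rickart subalgebra (closure of intersections under the Rickart structure), but this too is routine.
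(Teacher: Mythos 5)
Your proposal is correct and follows exactly the route the paper takes: the paper's proof is the single sentence that the argument of Theorem~\ref{thm:colimcstar} ``holds verbatim'' with C*-algebras replaced by Rickart C*-algebras. The one detail you add beyond the paper --- that the mediating morphism $m$ preserves $\RP$ because $\RP(a)$ lies in $\generated{A}{a}$, on which $m$ agrees with the $\RP$-preserving cocone map $f_{\generated{A}{a}}$ --- is a genuine (if routine) point the paper leaves implicit, and your treatment of it is sound.
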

\begin{proof}
  The proof of Theorem~\ref{thm:colimcstar} holds verbatim when every
  reference to (partial) C*-algebras is replaced by (partial)
  Rickart C*-algebras.
  \qed
\end{proof}

If $\Cat{Rickart}$ is the subcategory of $\Cat{Cstar}$ consisting of
Rickart C*-algebras and *-morphisms preserving RP, then there is a
functor
\[
  N: \Cat{Rickart} \to \Cat{PRickart}
\]
sending every Rickart C*-algebra $A$ to its normal part; this follows
from \cite[Proposition~4.4]{berberian:baeronlinebook}. Similar
results hold for any type of C*-algebra that is defined by algebraic
properties, such as AW*-algebras and spectral C*-algebras (see
\cite[5.1]{heunenlandsmanspitters:bohrification}). We will come back
to AW*-algebras in Section~\ref{sec:projections} below.

The distinguishing feature of von Neumann algebras amongst
C*-algebras, in contrast, is topological in nature. This makes it
harder to come up with a notion of partial von
Neumann algebra: the obvious definition -- a partial
C*-algebra $A$ in which every subset of commeasurable elements is contained in a von Neumann algebra --
has the drawback that it is not clear if $N(A)$ would be a partial von
Neumann algebra given a von Neumann algebra $A$. We can, however,
still obtain the following.

Let $A$ be a von Neumann algebra. Without loss of generality, we may
assume that $A$ acts on a Hilbert space $H$. Denote the von Neumann
subalgebra of a von Neumann algebra $A$ generated by a subset $S
\subseteq A$ by $\Generated{A}{S}$. It is the closure of the
C*-algebra $\generated{A}{S}$ in the weak operator
topology, and by von Neumann's double commutant theorem~\cite[Theorem
5.3.1]{kadisonringrose:operatoralgebras}, it equals $\generated{A}{S}''$.

\begin{lemma}
\label{lem:VNgencomm}
  If a C*-subalgebra $C$ of a von Neumann algebra $A$ is commutative,
  then so is its von Neumann envelope $\Generated{A}{C}$.
  Hence if $a \in A$ is normal, then $\Generated{A}{a}$ is commutative.
\end{lemma}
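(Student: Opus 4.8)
The plan is to use the double commutant theorem, which was recalled just before the lemma. If $C$ is a commutative C*-subalgebra of $A$, then $C \subseteq C'$, where $C'$ denotes the commutant of $C$ in $B(H)$ (or inside $A$, but it is cleaner to work in $B(H)$). Taking commutants reverses inclusions, so $C'' \subseteq C'$. But $C'' = \Generated{A}{C}$ (more precisely, the von Neumann algebra generated by $C$ in $B(H)$, which is contained in $A$ since $A$ is already weakly closed and contains $C$). Hence every element of $\Generated{A}{C} = C''$ commutes with every element of $C$, i.e.\ $C'' \subseteq C'$. To upgrade this to commutativity of $C''$ itself, I would apply the same reasoning once more: from $C \subseteq C'$ we also get, taking commutants, $C'' \subseteq C'$, and now intersecting, since $C'' \subseteq C$-bicommutant reasoning gives $C'' \subseteq (C'')'$ as well. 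Concretely: $C \subseteq C'$ implies $(C')' \subseteq (C)'$, that is $C'' \subseteq C'$; and since $C \subseteq C''$ we get $(C'')' \subseteq C'$, while also $C'' \subseteq C'$ means every element of $C''$ lies in the commutant of $C'$... the cleanest formulation is simply: $C$ commutative means $C \subseteq C'$; apply $(-)'$ to get $C'' \subseteq C'$; since $C \subseteq C''$, apply $(-)'$ again to get $C''' \subseteq C'$; but $C''' = C'$ always, so this is automatic. The point that actually gives commutativity is that $C'' \subseteq C'$ combined with $C'' \subseteq C''$ does not immediately suffice, so instead I note: $C \subseteq C'$ $\Rightarrow$ $C'' \subseteq C'''= C'$, hence $C'' \subseteq C'$; and separately $C' \subseteq C'''$ trivially; therefore $C'' \subseteq C' \subseteq (C'')'$ (using $C'' \subseteq C'$ once more to see $C' \subseteq (C'')'$ by applying $(-)'$). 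Thus $C'' \subseteq (C'')'$, which is exactly the statement that $C''$ is commutative.

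For the second sentence, if $a \in A$ is normal then the C*-subalgebra $\generated{A}{a}$ it generates (together with $1$) is commutative — this is the standard fact that a C*-algebra singly generated by a normal element is commutative — so the first part applies with $C = \generated{A}{a}$, giving that $\Generated{A}{a} = \generated{A}{a}'' $ is commutative.

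I expect no real obstacle here; the only care needed is bookkeeping about \emph{which} algebra the commutant is taken in. The double commutant theorem as recalled says $\Generated{A}{S} = \generated{A}{S}''$, with commutants presumably taken in $B(H)$; one should check that $\generated{A}{S}'' \subseteq A$, which holds because $A$ itself is a von Neumann algebra on $H$ containing $\generated{A}{S}$, hence $A = A'' \supseteq \generated{A}{S}''$. With that settled, the inclusion-reversing behaviour of $(-)'$ and the identity $C''' = C'$ do all the work, and the chain $C \subseteq C' \Rightarrow C'' \subseteq C' \Rightarrow C'' \subseteq (C'')'$ finishes the argument.
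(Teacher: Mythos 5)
Your proof is correct, but it takes a genuinely different route from the paper's. The paper proves that $C''$ is commutative by a topological argument: it writes an arbitrary $b \in C''$ as a weak-operator limit $b = \lim_n b_n$ with $b_n \in C$, notes that each $b_n \in C \subseteq C'$ commutes with any $a \in C''$, and passes to the limit using separate weak-operator continuity of multiplication (citing Kadison--Ringrose 5.7.9). You instead argue purely order-theoretically with the commutant operation: $C$ commutative means $C \subseteq C'$; since $(-)'$ reverses inclusions, $C'' \subseteq C''' = C'$, and applying $(-)'$ once more gives $C'' \subseteq (C'')'$, which is commutativity of $C''$. Both arguments rest on the double commutant theorem to identify $\Generated{A}{C}$ with $C''$ (and your bookkeeping remark that $\generated{A}{S}'' \subseteq A'' = A$ is a worthwhile point the paper leaves implicit), but after that your argument needs no continuity at all, which is arguably cleaner; the paper's version is more concrete and self-contained at the level of operators. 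One stylistic caveat: the middle of your writeup backtracks several times and contains a redundant detour (e.g.\ $C' \subseteq (C'')'$ is automatic since $C''' = C'$), but the final chain $C \subseteq C' \Rightarrow C'' \subseteq C' \Rightarrow C'' \subseteq (C'')'$ is sound, as is your reduction of the second sentence to the first via the standard fact that a normal element generates a commutative C*-algebra.
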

\begin{proof}
  Let $a,b \in C''$. Since $C''$ is the (weak operator) closure of
  $C$, we can write $b$ as a (weak operator) limit
  $b=\lim_n b_n$ for $b_n \in C$. Then:
  \begin{align*}
        ab
      = a(\lim_n b_n) 
    & = \lim_n ab_n
        \eqcomment{(by \cite[5.7.9(i)]{kadisonringrose:operatoralgebras})} \\
    & = \lim_n b_na
        \eqcomment{(since $a \in C''$ and $b_n \in C \subseteq C'$)} \\
    & = (\lim_n b_n)a
        \eqcomment{(by \cite[5.7.9(ii)]{kadisonringrose:operatoralgebras})} \\
    & = ba.
  \end{align*}
  \qed
\end{proof}

\begin{theorem}
\label{thm:colimvonneumann}
  Let $A$ be a von Neumann algebra acting on a Hilbert space.
  Then $N(A)$ is a colimit in $\Cat{PCstar}$ of the
  (finitely generated) commutative von Neumann subalgebras of $A$.
\end{theorem}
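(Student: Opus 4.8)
The plan is to mimic the proof of Theorem~\ref{thm:colimcstar}, but taking the diagram over the poset of finitely generated \emph{von Neumann} subalgebras of $A$ rather than C*-subalgebras, and being careful that the relevant generated algebras live in $A$ as von Neumann algebras. Write $\mathcal{V}$ for the poset of finitely generated commutative von Neumann subalgebras $V$ of $A$, ordered by inclusion. By Lemma~\ref{lem:VNgencomm}, for any finite set $S$ of pairwise commuting normal elements of $A$ the algebra $\Generated{A}{S} = \generated{A}{S}''$ is a commutative von Neumann subalgebra of $A$, so $\mathcal{V}$ is ``large enough'': every normal $a \in A$ lies in $\Generated{A}{a} \in \mathcal{V}$, and any two commeasurable normal $a,b$ lie together in $\Generated{A}{a,b} \in \mathcal{V}$. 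Each $V \in \mathcal{V}$ is a commutative C*-algebra, hence an object of $\Cat{PCstar}$, and the inclusions $i_V \colon V \to N(A)$ are partial *-morphisms forming a cocone over the diagram $\mathcal{V} \to \Cat{PCstar}$.

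Next I would show this cocone is universal. Given any cocone $f_V \colon V \to B$ in $\Cat{PCstar}$, define $m \colon N(A) \to B$ by
\[
  m(a) = f_{\Generated{A}{a_1}}(a_1) + i f_{\Generated{A}{a_2}}(a_2),
\]
where $a = a_1 + ia_2$ is the decomposition into self-adjoint parts (note $a_1, a_2$ commute since $a$ is normal, and each is normal, so $\Generated{A}{a_j}$ makes sense). This is forced on any mediating morphism, so uniqueness is automatic; the content is that $m$ is a well-defined partial *-morphism. The verification is \emph{verbatim} the computation in the proof of Theorem~\ref{thm:colimcstar}: each clause (preservation of $+$, scalar multiplication, $\cdot$, $*$, $1$, and commeasurability) only uses that the relevant finitely many elements all lie in a single commutative subalgebra in the diagram and that the $f_V$ agree on overlaps. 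The one point to check is that $\Generated{A}{S}$ behaves like $\generated{A}{S}$ did: whenever finitely many elements $c_1, \dots, c_k \in A$ all lie in some common $V \in \mathcal{V}$, we have $\Generated{A}{c_j} \subseteq V$ and $\Generated{A}{c_j, c_\ell} \subseteq V$ for each pair, so the cocone equations let us replace every subscript by $V$; this is exactly the manipulation performed there.

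The main obstacle, and the only place the argument genuinely differs from Theorem~\ref{thm:colimcstar}, is ensuring that the expressions appearing in the computation — e.g. $a_1 b_1 - a_2 b_2$ and $a_1 b_2 + a_2 b_1$ for commeasurable $a, b$ — actually sit inside a \emph{von Neumann} subalgebra in the diagram, not merely a C*-subalgebra. This is handled by Lemma~\ref{lem:VNgencomm}: $\Generated{A}{a_1, b_1, a_2, b_2}$ is a commutative von Neumann subalgebra of $A$ (its generators are normal and pairwise commuting because $a \commeas b$ and both are normal), it is finitely generated, hence an object of $\mathcal{V}$, and it contains all the C*-algebraic combinations of $a_1, a_2, b_1, b_2$ that occur. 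With that in hand, the cocone identities for the $f_V$ collapse the computation exactly as before, and $f_V = m \after i_V$ together with the uniqueness of the self-adjoint decomposition finish the proof. $\Box$
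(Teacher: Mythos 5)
Your proposal is correct and is essentially the paper's own argument: the paper's proof simply says that, using Lemma~\ref{lem:VNgencomm}, the proof of Theorem~\ref{thm:colimcstar} goes through verbatim once every $\generated{A}{S}$ is replaced by $\Generated{A}{S}$. You have merely spelled out the details of that substitution, correctly identifying Lemma~\ref{lem:VNgencomm} as the ingredient guaranteeing that the generated von Neumann subalgebras remain commutative.
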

\begin{proof}
  Using Lemma~\ref{lem:VNgencomm}, the proof of
  Theorem~\ref{thm:colimcstar} holds verbatim when every
  occurence of $\generated{A}{S}$ is replaced by $\Generated{A}{S}$.
  \qed
\end{proof}

\section{Gelfand duality}
\label{sec:gelfand}

The full subcategory of $\Cat{PCstar}$ consisting of commutative
C*-algebras is just the category $\Cat{cCstar}$ of commutative
C*-algebras and *-morphisms. This category is dual to the category of
compact Hausdorff spaces and continuous functions via Gelfand
duality~\cite{johnstone:stonespaces}. Constructively, the latter
category is replaced by that of compact completely regular
locales~\cite{coquandspitters:gelfand}:
\begin{equation}
\label{eq:gelfand}\xymatrix{
  \Cat{cCstar} \ar@<1ex>^-{\Sigma}[rr] \ar@{}|-{\sim}[rr]
  && \Cat{KRegLoc}\op, \ar@<1ex>^-{\Cat{Loc}(-,\field{C})}[ll]
}\end{equation}
where $\Sigma(A)$ is the Gelfand spectrum of a commutative C*-algebra
$A$. The dualizing object $\field{C}$ is both a locale and a (partial)
C*-algebra; recall that it is in fact the initial partial C*-algebra
$\nul$.

The colimit theorem, Theorem~\ref{thm:colimcstar}, together with the
fact that the categories in~\eqref{eq:gelfand} are cocomplete and
complete, enables us to prove the following extension of Gelfand duality.

\begin{proposition}
  There is a reflection
  \[\xymatrix{
    \Cat{PCstar} \ar@<1ex>^-{K}[rr] \ar@{}|-{\perp}[rr]
    && \Cat{KRegLoc}\op, \ar@<1ex>^-{\Cat{Loc}(-,\field{C})}[ll]
  }\]
  in which the functor $K$ is determined by $ K(A) = \lim_{C \in \cC(A)\op} \Sigma(C)$.
\end{proposition}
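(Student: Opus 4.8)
The proposition asserts a reflection $K \dashv \Cat{Loc}(-,\field{C})$ between $\Cat{PCstar}$ and $\Cat{KRegLoc}\op$ with $K(A) = \lim_{C \in \cC(A)\op} \Sigma(C)$. The plan is to mimic the proof of the corresponding Stone-duality proposition verbatim, chaining together three bijective correspondences that are each natural in $A$ and in the compact regular locale $X$. Given $A \in \Cat{PCstar}$ and $X \in \Cat{KRegLoc}$, a morphism $f\colon K(A) \to X$ in $\Cat{KRegLoc}\op$ is, by the universal property of the limit $K(A) = \lim_{C} \Sigma(C)$, the same as a cocone $(f_C \colon \Sigma(C) \to X)_{C \in \cC(A)}$ in $\Cat{KRegLoc}\op$, i.e.\ a cone over the diagram in $\Cat{KRegLoc}$; by Gelfand duality~\eqref{eq:gelfand} this is the same as a cocone $(g_C \colon C \to \Cat{Loc}(X,\field{C}))_{C \in \cC(A)}$ in $\Cat{cCstar}$; and finally, since $\Cat{Loc}(X,\field{C})$ is a commutative C*-algebra and $A = \colim_{C \in \cC(A)} C$ in $\Cat{PCstar}$ by Theorem~\ref{thm:colimcstar}, such a cocone is exactly a morphism $g \colon A \to \Cat{Loc}(X,\field{C})$ in $\Cat{PCstar}$.

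The key steps, in order, are: (1) check that $K$ is well-defined, i.e.\ that $\lim_{C \in \cC(A)\op}\Sigma(C)$ exists in $\Cat{KRegLoc}\op$ — this uses completeness of $\Cat{KRegLoc}\op$ (equivalently cocompleteness of $\Cat{KRegLoc}$), which is available; (2) verify functoriality of $K$, using that $\cC$ is a functor $\Cat{PCstar} \to \Cat{POrder}$ and that $\Sigma$ is functorial, so a partial $*$-morphism $A \to A'$ induces a map of diagrams and hence a map of limits; (3) assemble the three correspondences above and observe that each is natural in both arguments, yielding the adjunction $K \dashv \Cat{Loc}(-,\field{C})$; (4) conclude it is a reflection by noting that the unit is an isomorphism on commutative C*-algebras — indeed if $A$ is commutative then $A$ is the terminal (largest) object of $\cC(A)$ by Proposition~\ref{prop:structureCcstar}(b), so the diagram $\cC(A)\op$ has an initial object and $K(A) = \Sigma(A)$, whence the unit $A \to \Cat{Loc}(\Sigma(A),\field{C})$ is the Gelfand isomorphism.

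The only real subtlety, and the step I expect to require the most care, is the passage between cocones in $\Cat{KRegLoc}\op$ and cones in $\Cat{KRegLoc}$: one must be scrupulous that reversing arrows twice (once for $\cC(A)\op$ versus $\cC(A)$, once for $\op$ on the target category) lands the indexing variance correctly, so that the cone over $(\Sigma(C))_{C}$ in $\Cat{KRegLoc}$ matches, under Gelfand duality, a \emph{cocone} under $(C)_{C}$ in $\Cat{cCstar}$ — which is precisely the shape to which Theorem~\ref{thm:colimcstar} applies. Once the variances are pinned down the rest is formal. Everything else (existence of the limit, functoriality, naturality of the three bijections) is routine and parallel to the Stone case. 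The actual proof is therefore almost identical to the proof of the Stone-duality proposition, with $\{0,1\}$ replaced by $\field{C}$, $\Cat{Boolean}$ by $\Cat{cCstar}$, $\Cat{Stone}$ by $\Cat{KRegLoc}$, and Theorem~\ref{thm:colimbool} by Theorem~\ref{thm:colimcstar}.
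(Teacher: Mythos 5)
Your proposal is correct and follows essentially the same route as the paper: the same chain of three natural bijections (universal property of the limit, Gelfand duality, and Theorem~\ref{thm:colimcstar}), with the reflection obtained from Proposition~\ref{prop:structureCcstar}(b). The extra care you take with variance and with existence/functoriality of $K$ is sound and only elaborates on points the paper leaves implicit.
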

\begin{proof}
  Let $A$ be a partial C*-algebra and $X$ a compact completely
  regular locale. Then there are bijective correspondences:
  \[\begin{bijectivecorrespondence}
    \correspondence[(in \Cat{KRegLoc}\op)]{f \colon K(A) = \lim_{C \in
        \cC(A)\op} \Sigma(C) \to X}
    \correspondence[(in \Cat{KRegLoc}\op)]{\forall_{C \in \cC(A)}.\;
        f_C \colon \Sigma(C) \to X}
    \correspondence[(in \Cat{cCstar})]{\forall_{C \in \cC(A)}.\;
        g_C \colon C \to \Cat{Loc}(X,\field{C})}
    \correspondence[(in \Cat{PCstar}).]{g \colon A \to
      \Cat{Loc}(X,\field{C})}
  \end{bijectivecorrespondence}\]
  The first correspondence holds by definition of limit, the middle
  correspondence holds by Gelfand duality~\eqref{eq:gelfand}, and the
  last correspondence holds by Theorem~\ref{thm:colimcstar}.
  Since all correspondences are natural in $A$ and $X$, this
  establishes the adjunction $K \dashv \Cat{Loc}(-,\field{C})$.
  Finally, since a commutative C*-algebra is trivially a colimit of
  itself in $\Cat{PCstar}$, the adjunction is a reflection.
  \qed
\end{proof}

\begin{theorem}
\label{thm:reflection}
  The reflection $K \dashv \Cat{Loc}(-,\field{C})$ extends Gelfand
  duality, \ie the following diagram commutes (serially).
  \[\xymatrix@C+7ex@R+3ex{
    & \Cat{cCstar} \ar@{^{(}->}[dl] \ar@/^/^-{\Sigma}[dr]
    \ar@{}|-{\sim}[dr] \\
    \Cat{PCstar} \ar@/^/^-{K}[rr] \ar@{}|-{\perp}[rr]
    && \Cat{KRegLoc}\op \ar@/^/^-{\Cat{Loc}(-,\field{C})}[ll]
    \ar@/^/^-{\Cat{Loc}(-,\field{C})}[ul]
  }\]
\end{theorem}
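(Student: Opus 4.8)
The plan is to run, almost verbatim, the argument that the reflection $K \dashv \Cat{Loc}(-,\{0,1\})$ extends Stone duality, replacing Proposition~\ref{prop:structureCboolean}(b) by Proposition~\ref{prop:structureCcstar}(b) and Stone duality~\eqref{eq:stone} by Gelfand duality~\eqref{eq:gelfand}. The statement involves two triangles. The triangle built from $\Cat{Loc}(-,\field{C})$ commutes on the nose, since this functor already lands in $\Cat{cCstar}$ and its composite with the full inclusion $\Cat{cCstar}\hookrightarrow\Cat{PCstar}$ is the very same functor; so all the content is in the other triangle.

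Next I would handle that triangle: take a commutative C*-algebra $A$, regarded as a partial C*-algebra. By Proposition~\ref{prop:structureCcstar}(b) the poset $\cC(A)$ is filtered and has $A$ itself as its greatest element, so $A$ is the initial object of $\cC(A)\op$. A limit of a diagram whose indexing category has an initial object is the value of the diagram there, whence
\[
  K(A) \;=\; \lim_{C \in \cC(A)\op} \Sigma(C) \;=\; \Sigma(A),
\]
the limiting cone having as its leg at $C$ the map $\Sigma$ assigns to the inclusion $C \hookrightarrow A$. Since $\Sigma(A)$ is a compact completely regular locale, this is an identity in $\Cat{KRegLoc}\op$, so $K$ and $\Sigma$ agree on objects of $\Cat{cCstar}$.

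For morphisms one uses that $K$ is characterised by the natural bijection $\Cat{KRegLoc}\op\big(K(A),X\big)\cong\Cat{PCstar}\big(A,\Cat{Loc}(X,\field{C})\big)$ coming from the preceding proposition. When $A$ is commutative, the last step of that chain of correspondences — the one supplied by Theorem~\ref{thm:colimcstar} — degenerates to the identity, because a cocone on the diagram $\cC(A)$ is then determined by its component at the top element $A$, whose coprojection into the colimit is $\id[A]$; combined with the (equally degenerate) first correspondence and with Gelfand duality~\eqref{eq:gelfand}, the displayed bijection becomes exactly the Gelfand one. Hence the restriction of $K$ to $\Cat{cCstar}$ is naturally isomorphic to $\Sigma$ — and, adjusting $K$ within its isomorphism class, may be taken equal to it — so the whole diagram commutes serially. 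I do not anticipate a genuine obstacle; the one point needing a little care is exactly this bookkeeping with the bijective correspondences, \ie checking that they collapse to honest identities when the indexing poset has a top element, so that no stray automorphism of $\Sigma(A)$ sneaks in.
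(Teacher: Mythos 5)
Your proposal is correct and follows essentially the same route as the paper: the key step in both is that for commutative $A$, Proposition~\ref{prop:structureCcstar}(b) makes $A$ the top of $\cC(A)$, hence initial in $\cC(A)\op$, so $K(A)=\lim_{C\in\cC(A)\op}\Sigma(C)=\Sigma(A)$. The extra bookkeeping you supply (the trivially commuting second triangle and the collapse of the adjunction correspondences on morphisms) is sound but is left implicit in the paper's one-line proof.
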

\begin{proof}
  If $A$ is a commutative C*-algebra, it is the initial
  element in the diagram $\cC(A)\op$ by
  Proposition~\ref{prop:structureCcstar}(b). Hence $K(A) = \lim_{C \in
    \cC(A)\op} \Sigma(C) = \Sigma(A)$.
  \qed
\end{proof}

\begin{corollary}
  Commutative C*-algebras form a reflective full subcategory of
  partial C*-algebras, \ie the inclusion $\Cat{cCstar} \hookrightarrow
  \Cat{PCstar}$ has a left adjoint $L \colon \Cat{PCstar} \to \Cat{cCstar}$.
\end{corollary}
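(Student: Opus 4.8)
The plan is to obtain the left adjoint $L$ simply by composing the two adjunctions assembled in the previous theorem, exactly as was done in the Boolean case. First I would recall that the reflection just established, $K \dashv \Cat{Loc}(-,\field{C})$, has left adjoint $K \colon \Cat{PCstar} \to \Cat{KRegLoc}\op$ and right adjoint $\Cat{Loc}(-,\field{C}) \colon \Cat{KRegLoc}\op \to \Cat{PCstar}$. Second, Gelfand duality~\eqref{eq:gelfand} is an equivalence $\Sigma \colon \Cat{cCstar} \to \Cat{KRegLoc}\op$ with (pseudo-)inverse $\Cat{Loc}(-,\field{C}) \colon \Cat{KRegLoc}\op \to \Cat{cCstar}$; being an equivalence, $\Sigma$ is in particular left adjoint to $\Cat{Loc}(-,\field{C})$. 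Third, adjoints compose: $K$ followed by $\Cat{Loc}(-,\field{C}) \colon \Cat{KRegLoc}\op \to \Cat{cCstar}$ is left adjoint to $\Sigma \colon \Cat{cCstar} \to \Cat{KRegLoc}\op$ followed by $\Cat{Loc}(-,\field{C}) \colon \Cat{KRegLoc}\op \to \Cat{PCstar}$. So I would set
\[
  L \;=\; \Cat{Loc}(-,\field{C}) \after K \colon \Cat{PCstar} \longrightarrow \Cat{cCstar},
\]
and the composite right adjoint is naturally isomorphic to the inclusion $\Cat{cCstar} \hookrightarrow \Cat{PCstar}$, by the commuting diagram of the previous theorem: for a commutative C*-algebra $A$, one has $K(A) = \Sigma(A)$ (since $A$ is initial in $\cC(A)\op$ by Proposition~\ref{prop:structureCcstar}(b)), hence the composite right adjoint sends $A$ to $\Cat{Loc}(\Sigma(A),\field{C}) \cong A$, which is exactly the inclusion. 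Therefore the inclusion has a left adjoint, namely $L$, and $\Cat{cCstar}$ is a reflective full subcategory of $\Cat{PCstar}$.

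I do not anticipate a genuine obstacle here: the statement is a formal consequence of the two adjunctions already in hand, and the only mild care needed is to note that an equivalence of categories is in particular an adjunction in both directions, so that Gelfand duality can legitimately be fed into the composition-of-adjoints argument. One could alternatively observe directly that a full subcategory is reflective as soon as it is the image of a right adjoint whose left adjoint lands in it up to isomorphism — but the one-line composition argument above, paralleling the Boolean corollary after the Stone-duality theorem, is cleanest. An equally short route is to invoke that a fully faithful right adjoint exhibits its domain as a reflective subcategory; since the diagram of the preceding theorem shows $\Cat{Loc}(-,\field{C}) \after K$ is left adjoint to the fully faithful inclusion, we are done.

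For the write-up I would keep it to two sentences: define $L = \Cat{Loc}(-,\field{C}) \after K$ and remark that the adjunctions of the previous theorem compose, using that $\Sigma$ is an equivalence, so its pseudo-inverse $\Cat{Loc}(-,\field{C})$ is both its left and right adjoint; the commuting triangle then identifies the composite right adjoint with the inclusion. This mirrors verbatim the corresponding corollary in the Stone-duality section, so consistency of exposition also favours this phrasing.
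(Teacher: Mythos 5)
Your proposal is correct and matches the paper's own argument, which likewise obtains $L = \Cat{Loc}(-,\field{C}) \after K$ by composing the adjunctions of the previous theorem. The extra detail you supply (that Gelfand duality, being an equivalence, yields the needed adjunction, and that the commuting triangle identifies the composite right adjoint with the inclusion) is exactly what the paper leaves implicit.
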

\begin{proof}
  The adjunctions of the previous theorem compose, giving the required
  left adjoint as $L=\Cat{Loc}(-,\field{C}) \after K$.
  \qed
\end{proof}

This means that for a partial C*-algebra $A$ one has
\[
  \Cat{PCstar}(A,\field{C}) \cong \Cat{cCstar}(L(A),\field{C}).
\]
In other words, multiplicative quasi-states of $A$ that are
multiplicative on commutative subalgebras precisely correspond to
states of $L(A)$. Thus these quasi-states have good (categorical)
behaviour. 
However, things are not as interesting as they may seem.
By the Kochen-Specker theorem, no von Neumann algebra $A$ without
factors of type $I_1$ or $I_2$ can have such states~(\cite{doering:kochenspecker}, see also
\cite{redei:quantumlogic}). It follows that $K(A)=\nul$
and hence $L(A)=\een$ for such algebras. More generally, let us call a
partial C*-algebra $A$ \emph{Kochen-Specker} when $L(A)=\een$.
Any such algebra $A$ has no quasi-states:
$\Cat{PCstar}(A,\field{C}) \cong \Cat{cCstar}(\een,\field{C}) =
\emptyset$. Also, Kochen-Specker partial C*-algebras are a
`coproduct-ideal' in a sense that we now make precise.
For $X \in \Cat{KRegLoc}$ we have $\nul \times X = \nul$, so by Gelfand
duality~\eqref{eq:gelfand}, we have $\een + C = \een$ for a
commutative C*-algebra $C$. So if $A \in \Cat{PCstar}$ is
Kochen-Specker, and $B \in \Cat{PCstar}$ arbitrary, then also $A+B$ is
Kochen-Specker:
\[
  L(A+B) = L(A) + L(B) = \een + L(B) = \een.
\]
The first equality holds because $L$, being a left adjoint, preserves
colimits.
Nevertheless, the reflection of Theorem~\ref{thm:reflection} is still
interesting. Even though it does not teach much about the theory of
C*-algebras proper, it is an important step in seeing how far $A$ can
be reconstructed from $\cC(A)$ (or its Bohrification $\underline{A}$, see
Section~\ref{sec:bohrification}). See also Remark~\ref{remark:disclaimer}.

\section{Projections, partial AW*-algebras and tensor products}
\label{sec:projections}

This section discusses a functor $\Cat{PCstar} \to \Cat{PBoolean}$,
relating Sections \ref{sec:bool} and \ref{sec:stone} to
Sections \ref{sec:cstar} and \ref{sec:gelfand}.

\subsection{Projections and partial AW*-algebras}

An element $p$ of a partial C*-algebra $A$ is called a
\emph{projection} when it satisfies $p^* = p =
p^2$. The elements $0 \in A$ and $1 \in A$ are trivially projections;
other projections are called nontrivial.

\begin{lemma}
  There is a functor $\Proj \colon \Cat{PCstar} \to \Cat{PBoolean}$
  where $\Proj(A)$ is the set of projections of $A$.
\end{lemma}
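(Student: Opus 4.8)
The plan is to verify that $\Proj$ is well-defined on objects and on morphisms, and that it respects composition and identities. For the object part, I would first show that the set $\Proj(A)$ of projections of a partial C*-algebra $A$ inherits a partial Boolean algebra structure. The constants are $0$ and $1$; the complement is $\lnot p = 1 - p$ (which is a projection because $p$ is self-adjoint and $(1-p)^2 = 1 - 2p + p^2 = 1-p$, using that $p$ commutes with $1$ so this computation takes place inside a commutative subalgebra); commeasurability on $\Proj(A)$ is the restriction of $\commeas$ from $A$; and for commeasurable projections $p, q$ one sets $p \land q = pq$ and $p \lor q = p + q - pq$. The key point is that any family $S$ of pairwise commeasurable projections of $A$ sits inside a commutative C*-subalgebra $C = \generated{A}{S} \in \cC(A)$, and the projections of $C$ form an honest Boolean algebra (this is the classical fact that the projection lattice of a commutative, in fact any, C*-algebra is a Boolean algebra when the elements commute — concretely, $C \cong C(X)$ for a compact space $X$, and its projections are the characteristic functions of clopen sets, which form a Boolean algebra). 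So the set $T = \Proj(C)$ witnesses the amalgamation condition in the definition of a partial Boolean algebra, and the operations just described agree with the Boolean operations of $\Proj(C)$.

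Next I would check the morphism part. Given a partial *-morphism $f \colon A \to B$, I claim its restriction $\Proj(f) \colon \Proj(A) \to \Proj(B)$ is well-defined and is a morphism of $\Cat{PBoolean}$. It is well-defined because $f$ preserves $*$ and preserves multiplication on commeasurable (in particular on equal, since $\commeas$ is reflexive) pairs, so $f(p)^* = f(p^*) = f(p)$ and $f(p)^2 = f(pp) = f(p)f(p) \cdot$, wait—more carefully, $f(p)f(p) = f(pp) = f(p)$ since $p \commeas p$; hence $f(p)$ is a projection of $B$. That $\Proj(f)$ preserves commeasurability is immediate from the corresponding clause for $f$. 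Preservation of $0$, $1$ is immediate. Preservation of $\lnot$ follows since $f(1 - p) = f(1) + f(-p) = 1 + (-1)f(p) = 1 - f(p)$, using linearity (the clause $f(za) = zf(a)$ with $z = -1$, plus additivity on the commeasurable pair $1, -p$—note $1$ is commeasurable with everything and $-p$ is commeasurable with $1$). Preservation of $\land$ and $\lor$ on commeasurable projections follows from $f(pq) = f(p)f(q)$ and additivity: $f(p \lor q) = f(p + q - pq) = f(p) + f(q) - f(p)f(q)$, all additions taking place among pairwise commeasurable elements since $p, q, pq$ lie in one commutative subalgebra whose image under $f$ is again commutative.

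Finally, functoriality: $\Proj(g \after f)$ and $\Proj(g) \after \Proj(f)$ are both the set-theoretic restriction of $g \after f$, and $\Proj(\id[A])$ is the identity on $\Proj(A)$; these are trivial. I do not expect any genuine obstacle here; the only place requiring a little care is the verification that $\Proj(A)$ actually satisfies the amalgamation axiom of a partial Boolean algebra (as opposed to just carrying the partial operations), and for that the load is carried entirely by the classical observation that the projections of a commutative C*-algebra form a Boolean algebra, together with the fact that $\generated{A}{S}$ for a commeasurable set $S$ of projections is a commutative C*-subalgebra of $A$ by the definition of partial C*-algebra. So the proof I would write is essentially: ``On objects, $\Proj(A)$ carries the obvious partial Boolean structure, with the amalgamation condition witnessed by $\Proj(\generated{A}{S})$ for commeasurable $S$; on morphisms, $\Proj(f)$ is the evident restriction, shown to preserve all structure using the defining clauses of a partial *-morphism; functoriality is immediate.''
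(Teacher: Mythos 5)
Your proposal is correct and follows essentially the same route as the paper's proof: commeasurability on $\Proj(A)$ is inherited from $A$, $\lnot p = 1-p$, $p\land q = pq$ for commeasurable (hence commuting) projections, with the amalgamation axiom witnessed by the Boolean algebra of projections of a commutative subalgebra, and $\Proj(f)$ given by restriction. You simply supply more of the routine verifications (the paper cites the fact that $pq$ is a projection for commuting projections $p,q$ and leaves the morphism checks to the reader), so there is nothing to add.
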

\begin{proof}
  First, $\Proj(A)$ is indeed a partial Boolean
  algebra. Commeasurability is inherited from $A$. One easily checks
  that $\neg p = 1-p$ is a projection when $p$ is. If $p,q$ are
  commeasurable in $\Proj(A)$, then they commute, whence the
  projection $p \land q = pq$ is also in
  $A$~\cite[4.14]{redei:quantumlogic}. This makes $\Proj(A)$ into a
  partial Boolean algebra.
  Finally, morphisms of partial C*-algebras are easily seen to
  preserve projections, making the assignment $A \mapsto \Proj(A)$
  functorial.
  \qed
\end{proof}

For the following class of partial C*-algebras we get stronger results.

\begin{definition}
  A partial Rickart C*-algebra $A$ is a \emph{partial AW*-algebra}, if
  it comes equipped with an operation
  \[
    \bigvee \colon \{ X \subseteq \Proj(A) \mid X \times X \subseteq
    \commeas \} \to \Proj(A),
  \]
  in such a way that each pairwise commeasurable $S \subseteq A$
  is contained in a pairwise commeasurable $T \subseteq A$ on which
  the operations determine a commutative AW*-algebra structure
  (\ie the structure is that of a commutative Rickart C*-algebra, whose
  projections form a complete Boolean algebra with suprema given by
  the operation $\bigvee$ above). Denote the subcategory of $\Cat{PCstar}$
  whose objects are partial AW*-algebras and whose morphisms are
  partial *-morphisms which preserve $\RP$ and $\bigvee$ by
  $\Cat{PAWstar}$.
\end{definition}

\begin{lemma}
  The functor $\Proj$ restricts to a functor $\Cat{PAWstar} \to
  \Cat{PCBoolean}$. 
  \end{lemma}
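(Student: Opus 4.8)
The plan is to show that if $A$ is a partial AW*-algebra, then $\Proj(A)$ carries the structure of a partial complete Boolean algebra, and that morphisms in $\Cat{PAWstar}$ induce morphisms in $\Cat{PCBoolean}$. We already know from the previous lemma that $\Proj(A)$ is a partial Boolean algebra with commeasurability inherited from $A$. So first I would define the partial join operation on $\Proj(A)$: for a pairwise commeasurable family $X \subseteq \Proj(A)$, set $\bigvee X$ to be the element provided by the operation $\bigvee$ on $A$, which by definition lands in $\Proj(A)$. This is well-typed because the domain of $A$'s $\bigvee$ is exactly the set of pairwise commeasurable subsets of $\Proj(A)$.

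Next I would verify the defining closure axiom of a partial complete Boolean algebra: every pairwise commeasurable $S \subseteq \Proj(A)$ sits inside a pairwise commeasurable $T \subseteq \Proj(A)$ on which the operations (including $\bigvee$) determine a complete Boolean algebra. Here I would take the pairwise commeasurable subset $S \subseteq A$ (it is pairwise commeasurable in $A$ since commeasurability is inherited), and use the partial AW*-algebra axiom to find a pairwise commeasurable $T' \subseteq A$ on which the operations give a commutative AW*-algebra structure with projections forming a complete Boolean algebra whose suprema are given by $\bigvee$. Then $T := \Proj(T') = T' \cap \Proj(A)$ is pairwise commeasurable, contains $S$, and is by definition of AW*-algebra a complete Boolean algebra under the inherited operations — with joins computed by the very operation $\bigvee$ we defined on $\Proj(A)$. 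This establishes that $\Proj(A) \in \Cat{PCBoolean}$.

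Finally, for functoriality, I would check that if $f \colon A \to B$ is a partial *-morphism preserving $\RP$ and $\bigvee$, then $\Proj(f)$ (the restriction/corestriction of $f$ to projections, which is already a morphism of $\Cat{PBoolean}$ by the previous lemma) additionally preserves $\bigvee$. This is immediate: for pairwise commeasurable $X \subseteq \Proj(A)$, $\Proj(f)(\bigvee X) = f(\bigvee X) = \bigvee f(X) = \bigvee \Proj(f)(X)$, where the middle equality is the hypothesis that $f$ preserves $\bigvee$, and $f(X) \subseteq \Proj(B)$ is pairwise commeasurable since $f$ preserves projections and commeasurability. Preservation of identities and composites is inherited from $\Proj$ being a functor already.

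I do not expect any serious obstacle: the statement is essentially bookkeeping, checking that the extra operation $\bigvee$ on a partial AW*-algebra restricts cleanly to its projections and that the relevant axioms and morphism conditions transport along $\Proj$. The only point requiring a little care is making sure the "complete Boolean algebra structure on $T$" promised by the partial AW*-algebra axiom really has its suprema computed by the global operation $\bigvee$ on $\Proj(A)$ rather than some ambient operation that only accidentally agrees — but this is built into the definition, which stipulates that the commutative AW*-algebra structure on $T'$ has projection-suprema "given by the operation $\bigvee$ above", so there is nothing to reconcile.
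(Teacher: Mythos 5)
Your proposal is correct and simply spells out the routine verification that the paper compresses into ``Clear from the definition of a partial AW*-algebra'': the operation $\bigvee$ of a partial AW*-algebra is already typed as a map on commeasurable sets of projections, the closure axiom transfers by taking $T = \Proj(T')$, and morphisms of $\Cat{PAWstar}$ preserve $\bigvee$ by definition. There is no divergence in approach and no gap.
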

\begin{proof}
  Clear from the definition of a partial AW*-algebra.
  \qed
\end{proof}

\begin{remark}
  It would be interesting to see whether this functor is part
  of an equivalence, like in the total case, where it is one side of
  an equivalence of categories between $\Cat{cAWstar}$ and
  $\Cat{CBoolean}$. 
\end{remark}

\begin{proposition}
  The functors $\Proj$ and $\cC$ commute for partial AW*-algebras:
  writing  $\cC'$ for the functor $\Cat{PAWstar} \to
  [\Cat{POrder},\Cat{cAWstar}]$, and $\cC$ for the functor $\Cat{PBoolean}
  \to \Cat{POrder}$, we have $\cC \after \Proj = \Proj
  \after \cC'$. Explicitly,
  \[
    \{ \Proj(C) \mid C \in \cC(A) \} = \cC(\Proj(A))
  \]
  for every partial AW*-algebra $A$.
\end{proposition}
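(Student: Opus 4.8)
The plan is to prove the equality of posets $\{\Proj(C) \mid C \in \cC(A)\} = \cC(\Proj(A))$ by establishing the two inclusions separately, in each case checking that the set-theoretic membership is witnessed by the right kind of commutative subalgebra. Throughout, I would use the fact, established in the proof of the previous lemma, that $\Proj$ sends a commeasurable set of elements of $A$ to a set of pairwise commeasurable projections, and that for commuting projections $p,q$ the meet $pq$ and join $p+q-pq$ again lie in $A$.

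First I would show $\{\Proj(C) \mid C \in \cC(A)\} \subseteq \cC(\Proj(A))$. Take $C \in \cC(A)$, so $C$ is a commutative AW*-subalgebra of $A$ (with the AW*-structure induced by the operations of $A$, in particular with $\bigvee$ on $\Proj(C)$ the restriction of the $\bigvee$ of $A$). Then $\Proj(C)$ is a complete Boolean algebra by the definition of a partial AW*-algebra, it consists of pairwise commeasurable elements of $\Proj(A)$ (commeasurability in $\Proj(A)$ being inherited from $A$), and the Boolean operations $\lnot, \land, \lor$ and the infinitary $\bigvee$ on $\Proj(C)$ are exactly those determined by the structure of $\Proj(A)$. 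Hence $\Proj(C)$ is a commeasurable complete Boolean subalgebra of $\Proj(A)$, \ie an element of $\cC(\Proj(A))$.

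Conversely, for $\cC(\Proj(A)) \subseteq \{\Proj(C) \mid C \in \cC(A)\}$, take $D \in \cC(\Proj(A))$, a commeasurable complete Boolean subalgebra of $\Proj(A)$. Its elements form a pairwise commeasurable subset $S = D$ of $A$, so by the definition of a partial AW*-algebra there is a pairwise commeasurable $T \supseteq S$ carrying a commutative AW*-structure; we may take $T = \generated{A}{S}$ to be the smallest such, which is then an element $C$ of $\cC(A)$ containing $D$. I then need $D = \Proj(C)$. The inclusion $D \subseteq \Proj(C)$ is immediate since $D \subseteq C$ and the elements of $D$ are projections. For the reverse inclusion, the key observation is that a projection of $C$ lies in the Boolean subalgebra of $\Proj(C)$ generated (completely) by $D$: indeed $D$ generates $C$ as an AW*-algebra, and in a commutative AW*-algebra generated by a set of projections every projection is a countable... in fact a $\bigvee$-combination of those generators, so it belongs to the complete Boolean subalgebra generated by $D$, which is $D$ itself since $D$ is already complete. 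Hence $\Proj(C) \subseteq D$, giving $D = \Proj(C)$ with $C \in \cC(A)$.

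The main obstacle is precisely this last step: verifying that in a commutative AW*-algebra $C$ generated by a subset $D \subseteq \Proj(C)$ of projections, every projection of $C$ already lies in the complete Boolean algebra generated by $D$. This is the analogue, in the AW*-setting, of the classical fact for von Neumann algebras that the projection lattice of $\generated{A}{S}''$ is generated by $\Proj(\generated{A}{S})$; here one must instead argue algebraically, using that $C = \generated{A}{D}$ is a norm-closed $*$-algebra together with the AW*-structure, and that projections in a commutative Rickart (hence AW*) algebra are controlled by the Rickart projections $\RP$ of elements built from $D$. Once that structural lemma is in place, the equality of the two posets, and the asserted commutativity $\cC \after \Proj = \Proj \after \cC'$ of functors, follows by tracking the order-preserving bijection through the argument and checking naturality, which is routine.
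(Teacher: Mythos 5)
Your overall decomposition matches the paper's: the inclusion $\{\Proj(C)\mid C\in\cC(A)\}\subseteq\cC(\Proj(A))$ is immediate from the definition of a partial AW*-algebra, and for the converse both you and the paper pass from a complete Boolean subalgebra $D\in\cC(\Proj(A))$ to $C=\generated{A}{D}\in\cC(A)$ and must show $\Proj(C)=D$. But at exactly that point your argument has a genuine gap, which you yourself flag as ``the main obstacle'': you assert that in a commutative AW*-algebra generated by a set of projections $D$, every projection is a $\bigvee$-combination of the generators and hence lies in the complete Boolean subalgebra generated by $D$. This is not justified, and as phrased it is not even the right claim: a projection of $\generated{A}{D}$ need not literally be a supremum of elements of $D$, and the weaker assertion that it lies in the complete Boolean algebra generated by $D$ is essentially the proposition itself specialized to the commutative case, so invoking it here is circular. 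Nothing in your sketch actually controls the projections of the generated AW*-subalgebra; the appeal to Rickart projections $\RP$ of elements built from $D$ is a direction one could pursue, but you do not carry it out.

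The paper closes precisely this gap by a categorical argument: it uses the equivalence $\Proj\dashv F$ between $\Cat{cAWstar}$ and $\Cat{CBoolean}$ coming from Stone and Gelfand duality, where $F$ sends a complete Boolean algebra to the continuous functions on its Stone space. The inclusion $i\colon D\hookrightarrow\Proj(C)$ yields a map $FD\to F\Proj(C)\cong C$ whose image is a commutative AW*-subalgebra of $C$ containing $D$ (this containment is checked via naturality of the counit $\epsilon\colon\Proj(F-)\Rightarrow 1$); since $C=\generated{A}{D}$ is the smallest such subalgebra, $Fi$ must be an isomorphism, and since $F$ is part of an equivalence, so is $i$, giving $\Proj(C)=D$. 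If you want to keep your more hands-on route you would have to prove your structural lemma directly --- for instance by identifying $C$ with the continuous functions on the Stone space of $D$ and checking that every idempotent there corresponds to a clopen of that space --- but that amounts to re-deriving the duality, so you may as well invoke it as the paper does.
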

\begin{proof}
  This follows from the combination of Stone and Gelfand duality,
  which yields an equivalence between $\Cat{cAWstar}$ and
  $\Cat{CBoolean}$. One direction of the equivalence
  is obtained by taking projections and the other is obtained by
  taking $C(X)$ where $X$ is the Stone space associated to the
  (complete) Boolean algebra. For the purposes of the proof, we will
  denote the latter composite functor by $F$. In particular, the
  projections $\Proj(C)$ of a commutative AW*-algebra $C$
  form a complete Boolean algebra and the left-hand side is contained
  in the right-hand side.

  For the converse, let $B$ be a complete Boolean lattice of
  projections in $A$. Then the projections in $B$ commute pairwise,
  and hence generate a commutative AW*-subalgebra
  $C=\generated{A}{B}$. We obviously have an inclusion $i: B \subseteq
  \Proj(C)$. But then the composite 
  \[\xymatrix{
      FB \ar@{ >->}[r]^(.4){Fi} 
    & F\Proj(C) \ar[r]^(.6){\eta^{-1}_C}_(.6)\cong 
    & C, 
  }\]
  where $\eta$ is the unit of the adjunction $\Proj \dashv F$, shows
  that $FB$ is isomorphic to a commutative AW*-subalgebra of $A$
  contained in $C$. This commutative subalgebra also contains $B$,
  because the diagram 
  \[\xymatrix{
       \Proj(FB) \ar@{ >->}[rr]^(.45){\Proj(Fi)}
                 \ar[d]^\cong_{\epsilon_B} 
    && \Proj(F\Proj(C)) \ar[d]_\cong^{\epsilon_{\Proj(C)} = \Proj(\eta_C^{-1})}  \\
       B \ar@{ >->}[rr]_i 
    && \Proj(C) 
  }\]
  commutes by naturality of the counit $\epsilon: \Proj (F-)
  \Rightarrow 1$. Since $C = \generated{A}{B}$, this implies that $Fi$
  is an isomorphism. But then so is $i$ and therefore $B = \Proj(C)$. 
  \qed
\end{proof}

As a corollary to the previous proposition, we can extend
Proposition \ref{prop:structureCcstar} with atoms to mirror
Proposition \ref{prop:structureCboolean}. Keep in mind that the following
corollary does not entail that $\cC(A)$ is atomic.

\begin{corollary}
  For a partial AW*-algebra $A$, the atoms of the poset $\cC(A)$ are
  $\generated{A}{p}$ for nontrivial projections $p$.
  \qed
\end{corollary}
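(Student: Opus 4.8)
\textit{Proof proposal.} The plan is to transport the description of atoms from the Boolean side, Proposition~\ref{prop:structureCboolean}(c), across the identity $\{\Proj(C)\mid C\in\cC(A)\}=\cC(\Proj(A))$ established in the preceding proposition. To do so I would first upgrade that set-level equality to an order isomorphism
\[
  \Proj\colon \cC(A) \xrightarrow{\ \sim\ } \cC(\Proj(A)),\qquad C\mapsto\Proj(C).
\]
Monotonicity is clear, and the preceding proposition already shows this map is surjective, with $B\mapsto\generated{A}{B}$ as a section (that proof exhibits $\Proj(\generated{A}{B})=B$ for every complete Boolean lattice $B$ of projections in $A$). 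For injectivity one needs the reverse composite, $\generated{A}{\Proj(C)}=C$ for every $C\in\cC(A)$; this is the standard consequence of Stone--Gelfand duality used throughout this section, namely that a commutative AW*-algebra is the AW*-subalgebra generated by its projections (its projections are the characteristic functions of clopens of its Stone spectrum, and the locally constant functions are norm-dense in $C(X)$). Hence $C\mapsto\Proj(C)$ and $B\mapsto\generated{A}{B}$ are mutually inverse monotone bijections, so an order isomorphism, and in particular it carries atoms to atoms.

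Next I would apply Proposition~\ref{prop:structureCboolean}(c) to the partial Boolean algebra $\Proj(A)$: the atoms of $\cC(\Proj(A))$ are exactly the subalgebras $\generated{\Proj(A)}{p}=\{0,p,\neg p,1\}$ for nontrivial $p\in\Proj(A)$, that is, for nontrivial projections $p$ of $A$. Pulling these back along the inverse isomorphism $B\mapsto\generated{A}{B}$ shows that the atoms of $\cC(A)$ are the commutative AW*-subalgebras $\generated{A}{\{0,p,\neg p,1\}}$; since $0$ and $1$ lie in every subalgebra and $\neg p=1-p$, this is just $\generated{A}{p}$. Conversely $\Proj(\generated{A}{p})=\generated{\Proj(A)}{p}$ because $\generated{A}{p}\cong\field{C}^2$ has only the four projections $0,p,1-p,1$, confirming that each such $\generated{A}{p}$ is indeed an atom. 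This yields the claim.

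The main obstacle is the first step: promoting the equality $\{\Proj(C)\mid C\in\cC(A)\}=\cC(\Proj(A))$ to an order isomorphism, for which the essential input is that a commutative AW*-algebra is generated by its projections. Once that is in hand the remainder is bookkeeping --- order isomorphisms preserve atoms, and the small verification that $\generated{A}{p}$ is the preimage of the Boolean atom $\generated{\Proj(A)}{p}$.
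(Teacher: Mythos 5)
Your proposal is correct and takes essentially the same route the paper intends: the corollary is left unproved there as an immediate consequence of the preceding proposition together with Proposition~\ref{prop:structureCboolean}(c), and you supply exactly the missing bookkeeping, including the one substantive input (a commutative AW*-algebra is generated as such by its projections, so $C\mapsto\Proj(C)$ is an order isomorphism $\cC(A)\cong\cC(\Proj(A))$ and not merely a surjection). Nothing further is needed.
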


\subsection{Tensor products}

It is clear from the description of coproducts in
$\Cat{PCstar}$ and $\Cat{PBoolean}$ that the functor $\Proj$ preserves
coproducts.
Recall that in a coproduct of partial Boolean or C*-algebras,
nontrivial elements from different summands are never commeasurable.
Theorems \ref{thm:colimbool} and \ref{thm:colimcstar} provide
the option of defining a tensor product satisfying the adverse
universal property.

\begin{definition}
\label{def:tensor}
  Let $A$ and $B$ be a pair of partial Boolean algebras (partial
  C*-algebras). Define
  \[
    A \tensor B = \colim \{ C + D \mid C \in \cC(A), D \in \cC(B) \},
  \]
  where $C+D$ is the coproduct in the category of Boolean algebras
  (commutative C*-algebras).
\end{definition}

There are canonical morphisms $\kappa_A \colon A \to A \tensor B$ and
$\kappa_B \colon B \to A \tensor B$ as follows. By definition, $A
\tensor B$ is the colimit of $C+D$ for $C \in \cC(A)$ and $D \in
\cC(B)$. Precomposing with the coproduct injections $C \to C+D$ gives
a cocone $C \to A \tensor B$ on $\cC(A)$. By the colimit theorem, $A$
is the colimit of $\cC(A)$. Hence there is a mediating morphism
$\kappa_A \colon A \to A \tensor B$.

The unit element for both the tensor product and the coproduct is the
initial object $\nul$. The big difference between $A \tensor B$ and the
coproduct $A+B$ is that elements $\kappa_A(a)$ and $\kappa_B(b)$ are
always commeasurable in the former, but never in the latter. Indeed,
this universal property characterizes the tensor product.

\begin{proposition}
  Let $f \colon A \to Z$ and $g \colon B \to Z$ be morphisms in the
  category $\Cat{PBoolean}$ ($\Cat{PCstar}$). The cotuple
  $\cotuple{f}{g} \colon A+B \to Z$ factorizes through $A \tensor B$
  if and only if $f(a) \commeas f(b)$ for all $a \in A$ and $b \in B$.
\end{proposition}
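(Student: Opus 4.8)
The plan is to prove both directions of the biconditional, with the interesting content lying in the "if" direction.

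For the "only if" direction, suppose $\cotuple{f}{g}$ factors as $A+B \to A \tensor B \stackrel{h}{\to} Z$. By the construction of the canonical morphisms $\kappa_A, \kappa_B$, the composites $A \stackrel{\kappa_A}{\to} A \tensor B$ and $B \stackrel{\kappa_B}{\to} A \tensor B$ recover $f$ and $g$ after postcomposing with $h$, i.e.\ $f = h \after \kappa_A$ and $g = h \after \kappa_B$. Since $\kappa_A(a)$ and $\kappa_B(b)$ are commeasurable in $A \tensor B$ for all $a \in A$, $b \in B$ (this is exactly the defining feature of the tensor product noted just before the proposition), and since morphisms of $\Cat{PBoolean}$ ($\Cat{PCstar}$) preserve commeasurability, it follows that $f(a) = h(\kappa_A(a))$ and $g(b) = h(\kappa_B(b))$ are commeasurable in $Z$.

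For the "if" direction, assume $f(a) \commeas g(b)$ for all $a \in A$, $b \in B$. I want to build a cocone on the diagram $\{ C + D \mid C \in \cC(A), D \in \cC(B)\}$ with vertex $Z$. Fix $C \in \cC(A)$ and $D \in \cC(B)$. The restrictions $f|_C \colon C \to Z$ and $g|_D \colon D \to Z$ land in a common commutative (Boolean, resp.\ C*-) subalgebra of $Z$: indeed, every element of $f(C)$ commeasures with every element of $g(D)$ by hypothesis, and $f(C)$, $g(D)$ are each pairwise commeasurable (being homomorphic images of the commutative algebras $C$, $D$), so $f(C) \cup g(D)$ is a pairwise commeasurable set, hence contained in some commutative subalgebra $E \subseteq Z$. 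Because $E$ is a genuine (total) Boolean algebra / commutative C*-algebra, the universal property of the coproduct $C + D$ in $\Cat{Boolean}$ ($\Cat{cCstar}$) yields a unique morphism $[f|_C, g|_D] \colon C + D \to E \hookrightarrow Z$. One then checks these morphisms are compatible with the connecting maps $C + D \to C' + D'$ for $C \subseteq C'$, $D \subseteq D'$ — this is immediate from uniqueness in the coproduct universal property, since both composites restrict to $f|_{C'}$ and $g|_{D'}$ — so they assemble into a cocone, and the colimit property of $A \tensor B$ gives a mediating morphism $h \colon A \tensor B \to Z$. Finally, precomposing $h$ with $A \tensor B \to$ (the relevant coproduct) recovers $\cotuple{f}{g}$, because $h \after \kappa_A = f$ and $h \after \kappa_B = g$ (this follows by tracing through the colimit theorem, exactly as in the construction of $\kappa_A$), and then by the universal property of $A + B$ the cotuple $\cotuple{f}{g}$ equals the composite $A + B \to A \tensor B \stackrel{h}{\to} Z$.

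The main obstacle is the "if" direction, and within it the key nontrivial point is the verification that $f(C) \cup g(D)$ is pairwise commeasurable and therefore sits inside a single total subalgebra of $Z$: this is where the hypothesis $f(a) \commeas g(b)$ is used, and it is precisely what makes the coproduct in the \emph{total} category (rather than in the partial category) available to produce $[f|_C, g|_D]$. Everything else — compatibility with connecting maps, the passage through the two colimits via Theorems~\ref{thm:colimbool} and~\ref{thm:colimcstar}, and uniqueness — is a routine diagram chase, and the argument is uniform across the Boolean and C*-algebra cases since it only uses that coproducts exist in $\Cat{Boolean}$ and $\Cat{cCstar}$ and that the colimit theorem holds in both settings.
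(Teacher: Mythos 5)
Your proof is correct and follows essentially the same route as the paper's: identify morphisms out of $A \tensor B$ with cocones on the diagram $\{C+D\}$, observe that a morphism out of the total algebra $C+D$ forces its image to be pairwise commeasurable, and conversely use the commeasurability hypothesis to corestrict $f|_C$ and $g|_D$ into a single total subalgebra of $Z$ so that the coproduct property in $\Cat{Boolean}$ (resp.\ $\Cat{cCstar}$) supplies the cocone. The paper's proof is just a terser version of yours; your filling in of the cocone-compatibility check and the identification $h \after \kappa_A = f$ is consistent with it.
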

\begin{proof}
  By construction, giving $h \colon A \tensor B \to Z$ amounts to
  giving a cocone $C+D \to Z$ for $C \in \cC(A)$ and $D \in
  \cC(B)$. Because $C+D$ is totally defined, any morphism $C+D \to Z$
  must also be total. But this holds (for all $C$ and $D$) if and only
  if $f(a)$ and $g(b)$ are commeasurable for all $a \in A$ and $b \in
  B$, for (only) then can one take $h$ to be the cotuple of the
  corestrictions of $f$ and $g$.
  \qed
\end{proof}

The tensor products of Definition \ref{def:tensor} makes
$\Proj \colon \Cat{PCstar} \to \Cat{PBoolean}$ a monoidal
functor: the natural transformation $\Proj(A) \tensor
\Proj(B) \to \Proj(A \tensor B)$ is induced by the cotuples $\Proj(C)
+ \Proj(D) \to \Proj(A \tensor B)$ of
\[\xymatrix@1@C+5ex{
    \Proj(C) \ar^-{\Proj(C \hookrightarrow A)}[r]
  & \Proj(A) \ar^-{\Proj(\kappa_A)}[r]
  & \Proj(A \tensor B).
}\]

\begin{proposition}
  The functor $\Proj \colon \Cat{PCstar} \to \Cat{PBoolean}$ preserves
  coproducts and is monoidal.
  \qed
\end{proposition}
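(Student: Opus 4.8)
The plan is to establish two things: first, that $\Proj$ preserves coproducts, and second, that $\Proj$ is monoidal with respect to the tensor products of Definition~\ref{def:tensor}. The coproduct-preservation has essentially already been argued in the preceding paragraph: a coproduct of partial Boolean (or C*-)algebras is obtained by gluing along the trivial part, with elements from distinct summands never commeasurable, and the projections of such a gluing are exactly the gluing of the projections, since projections of elements $z1_i$ are just $0$ and $1$, and commeasurability in the coproduct is inherited. So for this half I would simply point to that description and note that $\Proj\big(\coprod_i A_i\big)$ and $\coprod_i \Proj(A_i)$ have the same underlying set, the same $0,1,\lnot$, the same commeasurability relation, and the same partial meets and joins.

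For the monoidal structure, the comparison morphism $\varphi_{A,B} \colon \Proj(A)\tensor\Proj(B) \to \Proj(A\tensor B)$ has already been constructed in the excerpt via the cotuples $\Proj(C)+\Proj(D)\to\Proj(A\tensor B)$; what remains is to check the coherence axioms (associativity pentagon and the two unit triangles) and naturality. Naturality in $A$ and $B$ follows because every piece of the construction --- the inclusions $\Proj(C)\hookrightarrow\Proj(A)$, the maps $\Proj(\kappa_A)$, and the colimit cocones --- is natural, and the colimit theorems (Theorems~\ref{thm:colimbool} and~\ref{thm:colimcstar}) guarantee uniqueness of mediating morphisms, so any two candidate squares commute. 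For the unit, the key observation is that $\Proj(\nul)=\nul$: the initial partial C*-algebra is $\field{C}$, whose only projections are $0$ and $1$, giving the initial partial Boolean algebra; so the unit coherence reduces to the fact that $\kappa$ is the identity when one tensor factor is the unit.

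The main obstacle I expect is verifying associativity: one must show that the two composites $\Proj(A)\tensor(\Proj(B)\tensor\Proj(C)) \to \Proj(A\tensor(B\tensor C))$ and $(\Proj(A)\tensor\Proj(B))\tensor\Proj(C)\to\Proj((A\tensor B)\tensor C)$ agree after transport along the associators. The clean way to do this is to exploit that everything in sight is determined by a universal property: by Definition~\ref{def:tensor}, a map out of $A\tensor B$ is the same as a compatible family of maps out of $C+D$ for $C\in\cC(A),D\in\cC(B)$, and iterating, a map out of $A\tensor B\tensor C$ corresponds to a compatible family of maps out of $C_1+C_2+C_3$ over the product poset $\cC(A)\times\cC(B)\times\cC(C)$ (using that the coproduct of commutative C*-algebras, resp. Boolean algebras, is associative). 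Since both legs of the pentagon, when restricted to a single summand $\Proj(C_1)+\Proj(C_2)+\Proj(C_3)$, are built from the coproduct injections $\Proj(C_i)\hookrightarrow\Proj(A)$ (resp.\ $\Proj(B),\Proj(C)$) followed by $\Proj(\kappa)$'s, they are literally the same cotuple, and hence agree on the colimit. Thus the coherence diagrams commute by the universal property of the colimits defining the tensor products, and no hands-on computation with elements is needed. The symmetry of $\Proj$ as a symmetric monoidal functor follows by the same argument, since the symmetry of $\tensor$ is itself induced from the symmetry of the coproduct on the building blocks.
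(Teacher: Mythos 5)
Your proposal matches the paper's treatment: the proposition is stated with no separate proof, the justification being exactly the preceding paragraphs, which note that coproduct preservation is clear from the explicit descriptions of coproducts in $\Cat{PCstar}$ and $\Cat{PBoolean}$ and which construct the comparison morphism $\Proj(A)\tensor\Proj(B)\to\Proj(A\tensor B)$ via the cotuples $\Proj(C)+\Proj(D)\to\Proj(A\tensor B)$, leaving naturality and coherence implicit. Your additional verifications --- that $\Proj(\nul)=\nul$, and that the coherence diagrams commute because both legs restrict to the same cotuple on the generating summands and hence agree by the universal property of the defining colimits (Theorems~\ref{thm:colimbool} and~\ref{thm:colimcstar}) --- are a sound filling-in of precisely what the paper omits.
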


We end this section by discussing the relation between the tensor
products of partial Boolean algebras and those of Hilbert spaces, describing
compound quantum systems. Let $\Cat{Hilb}$ be the category of Hilbert
spaces and continuous linear maps, and let $B \colon \Cat{Hilb} \to
\Cat{PCstar}$ denote the functor $B(H) = \Cat{Hilb}(H,H)$ acting on
morphisms as $B(f) = f \after (-) \after f^\dag$ where $f^\dag$ is the
adjoint of $f$. The definition of 
the tensor product in $\Cat{PCstar}$ as a colimit yields a natural
transformation $B(H) \tensor B(K) \to B(H \tensor K)$, induced by
morphisms $C \to B(H \tensor K)$ for $C \in \cC(B(H))$ given by $a
\mapsto a \tensor \id[K]$. Initiality of the tensor unit $\nul$ gives
a morphism $\nul \to B(\field{C})$, and these data satisfy the coherence
requirements. Hence the functor $B$ is monoidal, and therefore also
the composite $\Proj \after B \colon \Cat{Hilb} \to \Cat{PBoolean}$ is
a monoidal functor.

\section{Functoriality of Bohrification}
\label{sec:bohrification}

The so-called Bohrification construction
(see~\cite{heunenlandsmanspitters:bohrification}, whose notation we
adopt) associates to every C*-algebra $A$ an internal commutative C*-algebra
$\underline{A}$ in the topos $[\cC(A), \Cat{Set}]$, given by the
tautological functor $\underline{A}(C)=C$. Gelfand duality then yields
an internal locale, which can in turn be externalized. As it happens this construction works equally well for partial C*-algebras, so that Bohrification for ordinary C*-algebras can be seen as the composition of the functor $N$ from Proposition \ref{prop:functorN} with Bohrification for partial C*-algebras. Thus a locale
is associated to every object of $\Cat{PCstar}$. In this final section we
consider its functorial aspects. It turns out that the whole
construction summarized above can be made into a functor from
partial C*-algebras to locales by restricting the morphisms of the former.

Bohrification does not just assign a topos to each (partial)
C*-algebra, it assigns a topos with an internal C*-algebra. To reflect
this, we define categories of toposes equipped with internal structures.

\begin{definition}
  The category $\Cat{RingedTopos}$ has as objects pairs $(T,R)$ of a
  topos $T$ and an internal ring object $R \in T$.
  A morphism $(T,R) \to (T',R')$ consists of a geometric morphism $F
  \colon T' \to T$ and an internal ring morphism $\varphi \colon R' \to
  F^*(R)$ in $T$.

  By $\Cat{CstaredTopos}$ we denote the subcategory of
  $\cat{RingedTopos}$ of objects $(T,A)$ where $A$ is an internal
  C*-algebra in $T$ and morphisms $(F,\varphi)$ where $\varphi$ is an
  internal *-ring morphism.
\end{definition}

Notice that the direction of morphisms in this definition is opposite
to the customary one in algebraic geometry~\cite[4.1]{grothendieck:ega1}.

First of all, any functor $\cat{D} \to \cat{C}$ induces a geometric
morphism $[\cat{D}, \Cat{Set}] \to [\cat{C}, \Cat{Set}]$, of which the
inverse part is given by precomposition
(see~\cite[A4.1.4]{johnstone:elephant}). We have already seen that
$\cC$ is a functor $\Cat{PCstar}\op \to \Cat{POrder}\op$. Additionally,
restricting a morphism $f \colon B \to A$ of partial C*-algebras to $D
\in \cC(B)$ and corestricting to $\cC f(D)$ gives a morphism of
commutative C*-algebras. Hence we obtain a geometric
morphism of toposes $[\cC(B), \Cat{Set}] \to [\cC(A), \Cat{Set}]$ as
well as an internal morphism of commutative *-rings. The latter is a
natural transformation whose component at $D$ is $\underline{B}(D) \to
((\cC f)^* \underline{A})(D) = \underline{A}(\cC f(D))$. In other
words, we have a functor $\Cat{PCstar}\op \to \Cat{RingedTopos}$.

In general, (inverse parts of) geometric morphisms do not preserve
internal C*-algebras. But in this particular case, $(\cC f)^*
\underline{A}$ is in fact an internal C*-algebra in $[\cC(B),
\Cat{Set}]$. The proof is contained
in~\cite[4.8]{heunenlandsmanspitters:bohrification}, which essentially
shows that any functor from a poset $P$ to the category of C*-algebras
is always an internal C*-algebra in the topos $[P, \Cat{Set}]$.
Therefore, we really have a functor $\Cat{PCstar}\op \to
\Cat{CstaredTopos}$, as the following proposition records.

\begin{proposition}\label{prop:pcstarfunctorialcstaredtopos}
  Bohrification is functorial $\Cat{PCstar}\op \to \Cat{CstaredTopos}$.
  \qed
\end{proposition}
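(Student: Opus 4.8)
The plan is to verify that the assignment $A \mapsto ([\cC(A),\Cat{Set}],(\cC f)^*\underline{A})$ described in the paragraphs just above actually constitutes a functor $\Cat{PCstar}\op \to \Cat{CstaredTopos}$, rather than merely a pointwise-defined correspondence. Almost all of the work has already been done in the surrounding discussion; what remains is to check functoriality of the data, i.e. that identities go to identities and that the assignment respects composition, and that the target really lands in $\Cat{CstaredTopos}$ rather than just $\Cat{RingedTopos}$.

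First I would recall that $\cC \colon \Cat{PCstar}\op \to \Cat{POrder}\op$ is already a functor (established earlier in the paper), and that precomposition turns any functor $\cat{D} \to \cat{C}$ of posets into a geometric morphism $[\cat{D},\Cat{Set}] \to [\cat{C},\Cat{Set}]$ whose inverse image is $(-)\after$ that functor; since precomposition is strictly functorial, the assignment $f \mapsto$ (geometric morphism induced by $\cC f$) is a functor on the topos component. Second, for the ring-morphism component, a morphism $f \colon B \to A$ in $\Cat{PCstar}$ yields, for each $D \in \cC(B)$, the restriction--corestriction $\underline{B}(D) = D \to \cC f(D) = ((\cC f)^*\underline{A})(D)$; I would check that these components are natural in $D$ (immediate, since $f$ is a single global function) and that they assemble compatibly under composition of morphisms in $\Cat{PCstar}$, which again reduces to the fact that restricting a composite $g \after f$ agrees with composing the restrictions. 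The identity morphism on $A$ clearly gives the identity geometric morphism and the identity natural transformation. Third, I would invoke~\cite[4.8]{heunenlandsmanspitters:bohrification} to see that $(\cC f)^*\underline{A}$ is not merely an internal ring but an internal commutative C*-algebra in $[\cC(B),\Cat{Set}]$ (any functor from a poset to C*-algebras is internally a C*-algebra), and that the component maps above are internal *-ring morphisms because each $D \to \cC f(D)$ is a *-morphism of commutative C*-algebras. This lifts the functor from $\Cat{RingedTopos}$ to $\Cat{CstaredTopos}$.

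The main obstacle is not any single deep step but rather the bookkeeping around variance and the direction of morphisms in $\Cat{CstaredTopos}$: one must be careful that a morphism $f \colon B \to A$ of partial C*-algebras induces a geometric morphism $[\cC(B),\Cat{Set}] \to [\cC(A),\Cat{Set}]$ (covariant on the topos after one op) together with an \emph{internal} ring map $\underline{B} \to (\cC f)^*\underline{A}$ living in $[\cC(B),\Cat{Set}]$ (so pointing the `algebraic' way), and that these conventions are exactly the ones baked into the definition of $\Cat{CstaredTopos}$ — hence the resulting functor is genuinely $\Cat{PCstar}\op \to \Cat{CstaredTopos}$ and not its opposite. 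Once the variances are pinned down, verifying the functor axioms is a routine check that composition of restrictions is the restriction of the composition, so I would present that briefly and defer to~\cite[4.8]{heunenlandsmanspitters:bohrification} for the C*-algebra internalization.
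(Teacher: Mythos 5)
Your proposal is correct and follows essentially the same route as the paper, whose ``proof'' is precisely the discussion preceding the proposition: the geometric morphism is induced by precomposition along $\cC f$, the internal *-ring morphism by restriction--corestriction of $f$ at each $D \in \cC(B)$, and the upgrade from $\Cat{RingedTopos}$ to $\Cat{CstaredTopos}$ by the cited result that any functor from a poset to C*-algebras is an internal C*-algebra in the presheaf topos. The only nit is the slip in your opening line, where the object assignment should read $A \mapsto ([\cC(A),\Cat{Set}],\underline{A})$ rather than involving $(\cC f)^*\underline{A}$ (which belongs only to the morphism component); the rest of your write-up makes clear you have this right.
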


Next, we can apply constructive Gelfand duality $\underline{\Sigma}$
internally. 

\begin{definition} 
  The category $\cat{LocaledTopos}$ has as objects pairs $(T, L)$ of a
  topos $T$ and an internal locale object $L$. A morphism $(T, L) \to
  (T', L')$ consists of a geometric morphism $F: T'\to T$ and an
  internal locale morphism $\varphi: F^*(L) \to L'$ in $T'$. 
\end{definition}

\begin{proposition}\label{prop:pcstarfunctoriallocaledtopos}
  Bohrification is functorial $\Cat{PCstar}\op \to \Cat{LocaledTopos}$.
\end{proposition}
\begin{proof}
  It is clear from the description in~\cite{coquandspitters:gelfand}
  that the construction of the generating lattice of the internal Gelfand spectrum is
  geometric. Therefore it is preserved by (inverse image parts of)
  geometric morphisms. Hence taking the internal Gelfand spectrum of a
  commutative C*-algebra commutes with inverse image functors of
  geometric morphisms. The proof is now finished by composing the
  functor of  Proposition~\ref{prop:pcstarfunctorialcstaredtopos} with the
  following: on objects, $(T,A)$ maps to $(T,\underline{\Sigma}(A))$,
  and on morphisms, $(F,\varphi)$ maps to
  $(F,\underline{\Sigma}(\varphi))$. The latter morphism
  $\underline{\Sigma}(\varphi) \colon \underline{\Sigma}(B) \to
  F^*(\underline{\Sigma}(A))$ is indeed well-defined via
  $F^*(\underline{\Sigma}(A)) \cong \underline{\Sigma}(F^*(A))$. 
  \qed
\end{proof}

However, this is where the current line of reasoning stops:
there is no clear functor $\Cat{LocaledTopos} \to
\Cat{Loc}$. 
The most natural way to get ahead is to restrict the morphisms of
$\Cat{PCstar}$ as follows. 

\begin{lemma}
\label{lem:reflectcommeas}
  For morphisms $f \colon A \to B$ of $\Cat{PCstar}$, the following
  are equivalent:
  \begin{enumerate}
    \item[(a)] if $\cC f(C) \leq D$ and $\cC f(C') \leq D$ for $C,C' \in
      \cC(A)$ and $D \in \cC(B)$, then there is $C'' \in \cC(A)$ such
      that $C \leq C''$ and $C' \leq C''$ and $\cC f (C'') \leq D$;
    \item[(b)] $a \commeas a'$ when $f(a) \commeas f(a')$.
  \end{enumerate}
\end{lemma}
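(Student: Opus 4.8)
The plan is to prove the two implications separately, working directly from the definition of $\cC f$ as the action $C \mapsto \generated{B}{f(C)}$ (the smallest commutative subalgebra of $B$ containing the image of $C$).

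For the implication (b) $\Rightarrow$ (a), I would argue as follows. Suppose (b) holds, and suppose $\cC f(C) \leq D$ and $\cC f(C') \leq D$ for $C,C' \in \cC(A)$ and $D \in \cC(B)$. Take any $a \in C$ and $a' \in C'$. Then $f(a) \in \cC f(C) \subseteq D$ and $f(a') \in \cC f(C') \subseteq D$, so $f(a) \commeas f(a')$ because $D$ is a commutative subalgebra. By (b) it follows that $a \commeas a'$. Thus every element of $C$ is commeasurable with every element of $C'$, and since $C$ and $C'$ are themselves pairwise commeasurable, the union $C \cup C'$ is a set of pairwise commeasurable elements. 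Hence $C'' := \generated{A}{C \cup C'}$ is a commutative subalgebra of $A$ with $C \leq C''$ and $C' \leq C''$. It remains to check $\cC f(C'') \leq D$: since $f(C) \subseteq D$ and $f(C') \subseteq D$ and $D$ is closed under the algebraic operations and norm-limits, $f$ being a partial $*$-morphism sends algebraic expressions in $C \cup C'$ into $D$; taking the smallest such subalgebra, $\generated{B}{f(C'')} \subseteq D$, i.e. $\cC f(C'') \leq D$. This gives (a).

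For the implication (a) $\Rightarrow$ (b), suppose (a) holds and $f(a) \commeas f(a')$ in $B$. Apply (a) with $C = \generated{A}{a}$, $C' = \generated{A}{a'}$, and $D = \generated{B}{f(a), f(a')}$, which is a genuine commutative subalgebra because $f(a)$ and $f(a')$ are commeasurable (and each individually normal). One checks $\cC f(C) \leq D$: $f(C) = f(\generated{A}{a})$ consists of limits of algebraic expressions in $a$, and $f$ preserves these, so $f(C) \subseteq \generated{B}{f(a)} \subseteq D$; hence $\cC f(C) = \generated{B}{f(C)} \leq D$, and similarly $\cC f(C') \leq D$. Then (a) yields some $C'' \in \cC(A)$ with $a \in C \leq C''$ and $a' \in C' \leq C''$. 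Since $C''$ is a commutative subalgebra, all its elements are pairwise commeasurable; in particular $a \commeas a'$. This establishes (b).

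I expect the routine points — that $f$ preserves algebraic expressions and norm-limits, so that $f(\generated{A}{S}) \subseteq \generated{B}{f(S)}$ — to be unproblematic, following immediately from the clauses defining a partial $*$-morphism together with the description of $\generated{A}{S}$ given just after Definition~\ref{def:pcstar}. The only place that needs a moment's care is the direction (b) $\Rightarrow$ (a), where one must not merely produce an upper bound $C''$ of $C$ and $C'$ in $\cC(A)$ but simultaneously ensure $\cC f(C'') \leq D$; the point is that $C'' = \generated{A}{C \cup C'}$ is generated by elements whose images already lie in $D$, so this is forced. No genuine obstacle arises; the content is just unwinding the definition of $\cC f$ and of commeasurable subalgebra in both directions.
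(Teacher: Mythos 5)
Your proof is correct and follows essentially the same route as the paper's: the same witnesses $C=\generated{A}{a}$, $C'=\generated{A}{a'}$, $D=\generated{B}{f(a),f(a')}$ (the paper lists the adjoints explicitly, but these are automatically present in the generated subalgebras) for (a)$\Rightarrow$(b), and the same choice $C''=\generated{A}{C\cup C'}$ for (b)$\Rightarrow$(a). If anything, you are slightly more careful than the paper in verifying $\cC f(C'')\leq D$ and the inclusion $f(\generated{A}{S})\subseteq\generated{B}{f(S)}$.
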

\begin{proof}
  First assume (a) and suppose $f(a) \commeas f(a')$. Take
  $C=\generated{A}{a,a^*}$, $C'=\generated{A}{a',(a')^*}$, and
  $D = \generated{B}{f(a), f(a'), f(a)^*, f(a')^*}$.
  Then $\cC f(C) \leq D$ and $\cC f(C') \leq D$. Hence there is $C''$
  with $C \leq C''$ and $C' \leq C''$. So $a,a'$ are both elements of
  the commutative algebra $C''$, so $a \commeas a'$.

  Conversely, assuming (b) and supposing $\cC f(C) \leq D$ and $\cC
  f(C') \leq D$, for all $a \in C$ and $a' \in C'$ we have $f(a),f(a')
  \in D$, so that $f(a) \commeas f(a')$. But that means that $C$ and
  $C'$ are commuting commutative subalgebras of $A$. Hence we can take
  $C'' = \generated{A}{C,C'}$.
  \qed
\end{proof}

We say that morphisms satisfying the conditions in the previous lemma
\emph{reflect commeasurability}. Notice that this class of morphisms
excludes the type of counterexample discussed after
Theorem~\ref{thm:colimbool}. To show how the assignment of a
locale to a partial C*-algebra becomes functorial with these
morphisms, let us switch to its external
description~\cite[5.16]{heunenlandsmanspitters:bohrification}:
\begin{equation}
\label{eq:Sonobjects}
  S(A) = \{ F \colon \cC(A) \to \Cat{Set} \mid F(C) \mbox{ open in }
  \Sigma(C), \;F \mbox{ monotone}\}. \\
\end{equation}
For $A$ a partial C*-algebra, $S(A)$ is a locale. We want to extend
this to a functor $S \colon \Cat{PCstar}\op \to \Cat{Loc}$, or
equivalently, a functor $S \colon \Cat{PCstar} \to \Cat{Frm}$. Let $f
\colon A \to B$ be a morphism of partial C*-algebras, $F \in S(A)$,
and $D \in \cC(B)$. If $C \in \cC(A)$ satisfies $\cC f(C) \leq D$,
then we have a morphism $C \to D$ given by the composition $C
\stackrel{f}{\to} \cC f (C) \leq D$. Its Gelfand transform is a
frame morphism $\Sigma(C \stackrel{f}{\to} \cC f(C) \leq D) \colon
\Sigma(C) \to \Sigma(D)$. So, since $F(C) \in \Sigma(C)$, we get an
open in $\Sigma(D)$. The fact that $\Sigma(D)$ is a locale allows us
to take the join over all such $C$, ending up with the candidate
action on morphisms
\begin{equation}
\label{eq:Sonmorphisms}
  Sf(F)(D) = \bigvee_{\substack{C \in \cC(A) \\ \cC f(C) \leq D}} \Sigma(C \to
  \cC f(C) \leq D)(F(C)).
\end{equation}

\begin{theorem}
  Bohrification gives a functor $S \colon
  \Cat{PCstar}\op_{\mathrm{rc}} \to \Cat{Loc}$, where the domain is
  the opposite of the subcategory of $\Cat{PCstar}$ of morphisms
  reflecting commeasurability.
\end{theorem}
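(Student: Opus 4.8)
The strategy is to verify that the assignment $f \mapsto Sf$ specified by~\eqref{eq:Sonmorphisms} (i)~takes values in $S(B)$, (ii)~is functorial, and (iii)~consists of frame homomorphisms $S(A) \to S(B)$; only the last point, and there only the preservation of binary meets, will use the hypothesis that $f$ \emph{reflects commeasurability}. Throughout I use that $\cC$ is a functor, that each structure map $\Sigma(C \to D)$ occurring in~\eqref{eq:Sonmorphisms} (short for $\Sigma(C \xrightarrow{f} \cC f(C) \leq D)$) is a frame homomorphism by Gelfand duality~\eqref{eq:gelfand}, and that in the frame $S(A)$ of~\eqref{eq:Sonobjects} joins, finite meets and the top element are computed pointwise --- the latter because each $\Sigma(C \hookrightarrow C')$ preserves all of these, so that the pointwise operations of monotone functions are again monotone. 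For~(i): $Sf(F)(D)$ is a join of opens of $\Sigma(D)$, hence open; and if $D \leq D'$ in $\cC(B)$ then $\{C \mid \cC f(C) \leq D\} \subseteq \{C \mid \cC f(C) \leq D'\}$, and by functoriality of $\Sigma$ the $C$-summand of $Sf(F)(D)$, transported along $\Sigma(D \hookrightarrow D')$, is exactly the $C$-summand of $Sf(F)(D')$; hence $\Sigma(D \hookrightarrow D')(Sf(F)(D)) \leq Sf(F)(D')$ and $Sf(F) \in S(B)$.

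For~(ii), functoriality of $\cC$ gives $\cC(gf) = \cC g \after \cC f$ for $f \colon A \to B$ and $g \colon B \to E$, so that $\cC(gf)(C) \leq G$ holds if and only if $\cC f(C) \leq D$ and $\cC g(D) \leq G$ for some $D \in \cC(B)$ (take $D = \cC f(C)$); moreover, for a fixed such $C$, every admissible $D$ makes the composite $C \xrightarrow{f} \cC f(C) \leq D \xrightarrow{g} \cC g(D) \leq G$ into one and the same morphism of commutative C*-algebras, namely a corestriction of $gf$ followed by an inclusion. Since $\Sigma$ sends joins to joins, $(S(g) \after S(f))(F)(G)$ unfolds to a join, indexed by such pairs $(C,D)$, of terms depending only on $C$, which reindexes to $S(gf)(F)(G)$. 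For the identity, $\cC(\id[A]) = \id[\cC(A)]$, so $S(\id[A])(F)(D) = \bigvee_{C \leq D} \Sigma(C \hookrightarrow D)(F(C))$, which equals $F(D)$ because monotonicity of $F$ bounds each summand by $F(D)$ while the term $C = D$ attains it.

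For~(iii): preservation of arbitrary joins is immediate from pointwise joins and the fact that each $\Sigma(C \to D)$ preserves joins; preservation of the top element holds because each $\Sigma(C \to D)$ preserves the top open and the index set $\{C \mid \cC f(C) \leq D\}$ is nonempty, containing the least element $\generated{A}{0}$ of $\cC(A)$. For binary meets, pointwise meets in $S(A)$ together with distributivity of the frame $\mathcal{O}(\Sigma(D))$ reduce the identity $Sf(F \wedge G)(D) = Sf(F)(D) \wedge Sf(G)(D)$ to
\[
  \bigvee_{\substack{C \\ \cC f(C) \leq D}} \Sigma(C \to D)(F(C)) \wedge \Sigma(C \to D)(G(C))
  \;=\;
  \bigvee_{\substack{C,\,C' \\ \cC f(C),\,\cC f(C') \leq D}} \Sigma(C \to D)(F(C)) \wedge \Sigma(C' \to D)(G(C')),
\]
where $\leq$ is the diagonal $C = C'$. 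For $\geq$, given admissible $C$ and $C'$, reflection of commeasurability --- condition~(a) of Lemma~\ref{lem:reflectcommeas} --- supplies $C'' \in \cC(A)$ with $C \leq C''$, $C' \leq C''$ and $\cC f(C'') \leq D$; since then $\Sigma(C \to D) = \Sigma(C'' \to D) \after \Sigma(C \hookrightarrow C'')$, monotonicity of $F$ gives $\Sigma(C \to D)(F(C)) \leq \Sigma(C'' \to D)(F(C''))$ and likewise for $G$ and $C'$, so the $(C,C')$-summand on the right lies below the $C''$-summand on the left.

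I expect the meet clause to be the main obstacle: one has to recognize that the only thing preventing $Sf$ from respecting finite meets is the failure, in general, of two admissible commutative subalgebras $C, C'$ to merge into a single admissible $C''$, and that this merging property is exactly what Lemma~\ref{lem:reflectcommeas} captures. The remaining steps are bookkeeping, but they require care about which morphism of commutative C*-algebras sits inside each $\Sigma(-)$ and about the direction in which monotonicity of $F$ is invoked.
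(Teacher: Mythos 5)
Your proof is correct and follows essentially the same route as the paper's: pointwise frame structure on $S(A)$, preservation of joins and of the top element (via the nonempty index set containing the bottom of $\cC(A)$), and the binary-meet clause reduced to merging two admissible subalgebras $C,C'$ into a single $C''$ via Lemma~\ref{lem:reflectcommeas}. You merely spell out in more detail the steps the paper dismisses as routine (monotonicity of $Sf(F)$ and functoriality of $S$), and those verifications are accurate.
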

\begin{proof}
  One quickly verifies that $Sf(F)$, as given
  in~\eqref{eq:Sonmorphisms}, is monotone, and hence a well-defined
  element of $S(B)$, and that $Sf$ preserves suprema. The greatest
  element $1 \in S(A)$ is also preserved by $Sf$:
  \[
      Sf(1)(D)
    = \bigvee_{\substack{C \in \cC(A) \\ \cC f(C) \leq D}} \Sigma(C \to D)(1)
    = \bigvee_{\substack{C \in \cC(A) \\ \cC f(C) \leq D}} 1
    = 1.
  \]
  The last equality holds because the join is not taken over the empty
  set: there is always $C \in \cC(A)$ with $\cC f(C) \leq D$, namely
  $C=\nul$.

  To finish well-definedness and show that $Sf$ is a frame morphism,
  we need to show that it preserves binary meets. Recall that any
  frame satisfies the infinitary distributive law $(\bigvee_i y_i)
  \land x = \bigvee_i (y_i \land x)$. It follows that one always has
  $(\bigvee_i y_i) \land (\bigvee_j x_j) \geq \bigvee_k (y_k \land
  x_k)$. A sufficient (but not necessary) condition for equality to hold would be if for all $i,j$ there  exists $k$ such that $y_i \land x_j \leq y_k \land x_k$. Expanding
  the definition of $Sf$ and writing $x_C = \Sigma(C \to D)(F(C))$ and
  $y_C = \Sigma(C \to D)(G(C))$ gives precisely this situation:
  \begin{align*}
        Sf(F \land G)(D)
    & = \bigvee_{\cC f(C'') \leq D} x_{C''} \land y_{C''}, \\
        (Sf(F) \land Sf(G))(D)
    & = \big(\bigvee_{\cC f(C) \leq D} x_C \big) \land
        \big(\bigvee_{\cC f(C') \leq D} y_{C'} \big).
  \end{align*}
  So, by Lemma~\ref{lem:reflectcommeas}, $Sf$ will preserve binary meets
  if $f$ reflects commeasurability.
  Finally, it is easy to see that $S(\id) = \id$ and $S(g \after f) =
  Sg \after Sf$.
  \qed
\end{proof}

Let us conclude with four remarks concerning the last theorem.
\begin{itemize}
  \item From the above proof it additionally follows that this choice of
    morphisms is the largest for which the theorem holds:
    $\Cat{PCstar}_{\mathrm{rc}}$ is the largest subcategory of
    $\Cat{PCstar}$ for which~\eqref{eq:Sonmorphisms} gives a well-defined
    frame morphism.

  \item Replacing the Gelfand spectrum by the Stone
    spectrum yields a similar functor $\Cat{PBoolean}_{\mathrm{rc}}\op
    \to \Cat{Loc}$.


  \item The category $\Cat{Loc}$ is $\Cat{POrder}$-enriched, and
    hence a 2-category.  We remark that the functor $S$ given by
    \eqref{eq:Sonobjects} and \eqref{eq:Sonmorphisms} shows that the
    externalization of Bohrification is a two-dimensional colimit (in
    $\Cat{Loc}$) of the Gelfand spectra of commeasurable
    subalgebras. So, interestingly, whereas the one-dimensional colimit of
    the spectra will often be trivial because of the Kochen-Specker theorem (see the remarks at the end of Section 5), Bohrification shows that a two-dimensional colimit will be
    nontrivial.
\end{itemize}

\subsection*{Acknowledgement}

We are indebted to Steve Vickers, who pointed out
Proposition~\ref{prop:pcstarfunctoriallocaledtopos} to us.
We thank Klaas Landsman for useful comments. Additionally, we
owe the observation concerning~\cite{connes:factornotantiisomorphic}
to Thierry Coquand, and the reference
to~\cite{graetzerkohmakkai:booleansubalgebras} to John Harding and
Mirko Navara and their forthcoming paper `Subalgebras of orthomodular
lattices'.

\bibliographystyle{plain}
\bibliography{colim}

\end{document}